\newtheorem{theo}{Theorem}[section]
\newtheorem{prop}[theo]{Proposition}
\theoremstyle{definition}
\newtheorem{rmk}[theo]{Remark}
\newcommand\MScN[1]{\href{http://www.ams.org/mathscinet-getitem?mr=#1}{\nolinkurl{(#1)}}}
\newcommand\DOI[1]{\href{http://dx.doi.org/#1}{(doi: \nolinkurl{#1})}}
\newcommand\LINK[1]{\href{#1}{(link: \nolinkurl{#1})}}
\title[Stokes problem with Navier boundary condition]{Maximal $L^p-L^q$ regularity to the Stokes Problem with Navier boundary conditions}
\author{Hind Al Baba}
\address{Laboratoire de Math\'ematiques et de leurs applications-Pau, UMR, CNRS 5142, Batiment IPRA, Universit\'e de Pau et des pays de L'Adour, Avenue de L'universit\'e, Bureau 012, BP 1155, 64013 Pau cedex, France}
\email{hind.albaba@univ-pau.fr}
\address{Institute of Mathematics of the Czech Academy of Sciences, $\breve{Z}$itn\'a 25, 11567 Praha 1, Czech Republic.}
\email{albaba@math.cas.cz}
\thanks{The work of H. Al Baba is done between the Laboratoire de Math\'ematiques et de Leurs Applications; Universit\'e de Pau et des Pays de l'Adour in France and the Departemento de Matem\`aticas, Universidad del Pa\'is Vasco in Bilbao, Spain.}
\begin{document}

\null
\vskip -0.5 truecm

\maketitle

\vspace{-0.5cm}
\begin{center}
\today
\end{center}

\begin{abstract}
We prove in this paper some results on the complex and fractional powers of the Stokes operator with slip frictionless boundary conditions involving the stress tensor. This is fundamental and plays an important role in the associated parabolic problem and will be used to prove maximal $L^{p}-L^{q}$ regularity results for the  non-homogeneous Stokes problem. 
\end{abstract}

\vspace{0.25cm}

\noindent\textbf{Keyword.} Non-homogeneous Stokes Problem, Slip boundary conditions, Maximal regularity, Complex and fractional powers of operators.

\noindent MSC[2008] 35B65, 35D30, 35D35, 35K20, 35Q30,  76D05,  76D07, 76N10.

\section{Introduction}\label{Introduction}
This paper studies maximal $L^p-L^q$ regularity for the Stokes problem with Navier-slip boundary conditions
\begin{eqnarray}
\frac{\partial\boldsymbol{u}}{\partial t} - \Delta \boldsymbol{u
}+\nabla\pi=\boldsymbol{f}, & \mathrm{div}\,\boldsymbol{u}=0 \qquad\qquad\textrm{in}\,\,\,
\Omega\times (0,T),\label{StokesProblem} \\
\boldsymbol{u}\cdot\boldsymbol{n}=0, & \left[\mathbb{D}(\boldsymbol{u})\textbf{\textit{n}}\right]_{\boldsymbol{\tau}}=\boldsymbol{0}\qquad\textrm{on}\,\,\,
\Gamma\times (0,T),\label{Navierbc}\\
\boldsymbol{u}(0)= \boldsymbol{u}_{0}& \qquad \qquad\textrm{in} \,\,\,
\Omega, \label{u0}
\end{eqnarray} 
where $\mathbb{D}(\boldsymbol{u})=\frac{1}{2}(\nabla\boldsymbol{u}+\nabla\boldsymbol{u}^{T})$ is the stress tensor and $\Omega$ is a bounded domain of $\mathbb{R}^{3}$ of class $C^{2,1}$. Here $\boldsymbol{u}$ and $\pi$ denote respectively unknowns velocity filed and pressure of a fluid occupying the domain $\Omega$, while $\boldsymbol{u}_{0}$ and $\boldsymbol{f}$ represent respectively the given initial velocity and the external force. 

In the opinion of engineers and physicists, Systems \eqref{StokesProblem}-\eqref{u0} play an important role in many real life situations, such as in aerodynamics, weather forecast, hemodynamics,... Thus arises naturally the need to carry out a mathematical analysis of these systems which represent the underlying fluid dynamic phenomenology. For some mathematical justifications that stress the use of the boundary conditions \eqref{Navierbc}, we can refer to \cite{Amirat, Mikelic1} for the simulations of flows near rough wall, to \cite{Beavers} in the case of flows near perforated walls and to \cite{Berselli, Pares} for the simulation of turbulent flows.  We note that among the earliest works on the mathematical analysis of the Stokes and Navier-Stokes problems with the Navier-slip boundary conditions \eqref{Navierbc} we can cite the work of V. A.  Solonnikov and V. E. {\v{S}}{\v{c}}adilov  \cite{SolonnikovScadilov} who considered the stationary Stokes problem with the boundary conditions \eqref{Navierbc} in bounded or unbounded domains of $\mathbb{R}^{3}$ and proved the existence and regularity of solutions to this problem.  

The author together with C. Amrouche and A. Rejaiba \cite{Rejaiba}, proved the analyticity of the Stokes semi-group with the boundary conditions \eqref{Navierbc} in $L^{p}$-spaces. This guarantee the existence of complex and fractional powers of the Stokes operator with the corresponding boundary conditions. They also studied the homogeneous Stokes Problem with Navier-slip boundary conditions (\textit{i.e.} Problem \eqref{StokesProblem}-\eqref{u0} with $\boldsymbol{f}=\boldsymbol{0}$) and proved the existence of strong, weak and very weak solutions to this problem. 
 In this paper we shall prove maximal $L^{p}-L^{q}$ regularity for the non-homogeneous case (\textit{i.e.} Problem \eqref{StokesProblem}-\eqref{u0} with $\boldsymbol{f}\neq\boldsymbol{0}$). We shall also prove the existence of strong, weak and very weak solutions to this problem with maximal regularity. The key tool is the use of the complex and fractional powers of the Stokes operator with Navier-slip boundary conditions \eqref{Navierbc}. We note that the concept of very weak solution $\boldsymbol{u}\in\boldsymbol{L}^{p}(\Omega)$ to certain elliptic and parabolic problem with initial data of low regularity was introduced by J. L. Lions and E. Magenes \cite{LM} and is usually based on duality arguments for strong solutions. Therefore the boundary regularity required in this theory is the same as for strong solutions.

\medskip

Concerning the maximal $L^p$ regularity for the Stokes problem we can cite \cite{Solonnikov} by V. A. Solonnikov among the firsts works on this problem. The author constructed in \cite{Solonnikov} a solution $(\boldsymbol{u},\pi)$ to the initial value Stokes problem with Dirichlet boundary conditions ($\boldsymbol{u}=\boldsymbol{0}$ on $\Gamma\times\left[ 0,\,T\right)$). 
His proof was based on methods in the theory of potentials. However, when $\Omega$ is not bounded the result in \cite{Solonnikov} was not global in time. Later on, Y. Giga and H. Sohr  \cite{GiGa4} strengthened Solonnikov's result in two directions. First their result was global in time. Second, the integral norms that they used may have different exponent $p,q$ in space and time. To derive such global maximal $L^{p}-L^{q}$ regularity for the Stokes system with Dirichlet boundary conditions \cite{GiGa4} used the boundedness of the pure imaginary power of the Stokes operator. Exactly they used and extended an abstract perturbation result developed by G. Dore and A. Venni \cite{DV}.

M. Geissert et al. considered \cite{Geissert} the $L^p$ realization of the Hodge Laplacian operator defined by :
$\Delta_{M}:\mathbf{D}(\Delta_{M})\subset\boldsymbol{L}^{p}(\Omega)\longmapsto\boldsymbol{L}^{p}(\Omega),\,$ where 
\begin{equation}\label{DomainHodgeLaplacian}
\mathbf{D}(\Delta_{M})=\big\{
\boldsymbol{u}\in\boldsymbol{W}^{2,p}(\Omega);\,
\boldsymbol{u}\cdot\boldsymbol{n}\,=\,0,\,\,\,\boldsymbol{\mathrm{curl}}\,\boldsymbol{u}\times\boldsymbol{n}\,=\,\boldsymbol{0}\,\,\textrm{on}\,\,\Gamma\big\},
\end{equation}
\begin{equation}\label{HodgeLaplacian}
\forall\,\boldsymbol{u}\in\mathbf{D}(\Delta_{M}),\quad\Delta_{M}\boldsymbol{u}=\Delta\boldsymbol{u}\,\,\,\mathrm{in}\,\,\Omega,
\end{equation}
in a domain $\Omega\subset\mathbb{R}^{3}$ with a suitably smooth boundary. M. Geissert et al. proved \cite{Geissert} that for all $\lambda>0$, the $L^p$ realization of the operator $\lambda I-\Delta_{M}$ admits a bounded $\mathcal{H}^{\infty}$-calculus. They also showed that in the case where $\Omega$ is simply connected their result is true for $\lambda=0.$ Since the class of operators having a bounded $\mathcal{H}^{\infty}$-calculus in their corresponding Banach spaces enjoy the property of bounded pure imaginary powers, they deduced the maximal $L^p-L^q$ regularity to magneto-hydrodynamic equation with perfectly conducting wall condition.

Following the previous results in \cite{Geissert}, the author together with C. Amrouche and M. Escobedo proved in \cite{ARMA}, maximal $L^p-L^q$ regularity to the solution of the inhomogeneous Stokes Problem with  slip frictionless boundary conditions involving the tangential component of the velocity vortex instead of the stress tensor in a domain $\Omega$ not necessarily simply connected. Exactly, the authors considered in \cite{ARMA} the Stokes Problem \eqref{StokesProblem} with the following boundary condition:
\begin{equation}\label{nbc}
\boldsymbol{u}\cdot\boldsymbol{n}=0,\qquad
\boldsymbol{\mathrm{curl}}\,\boldsymbol{u}\times \boldsymbol{n} = \boldsymbol{0}\qquad\textrm{on} \quad \Gamma\times (0,T).
\end{equation}
We note that (cf. \cite{Daveiga}), in the case of a flat boundary, the two conditions \eqref{Navierbc} and \eqref{nbc} are equivalent. 

J. Saal considered in \cite{Jsaal} the Stokes problem in spatial regions with moving boundary and proved maximal $L^{p}-L^{q}$ regularity to this problem. The proof relies on a reduction of the problem to an equivalent nonautonomous system on a cylindrical space-time domain and the result includes bounded and unbounded regions. J. Saal also proved in \cite{Saal} maximal $L^{p}-L^{q}$ regularity for the Stokes problem with homogeneous Robin boundary conditions in the half space $\mathbb{R}^{3}_{+}$. To this end he proved that the associated Stokes operator is sectorial and admits a bounded $\mathcal{H}^{\infty}$-calculus on $\boldsymbol{L}^{p}_{\sigma,\tau}(\Omega)$.  Nevertheless, as shown in \cite{Shimada} the same  approach can not be applied to the Stokes problem with non-homogeneous  Robin boundary condition. For this reason R. Shimada \cite{Shimada} derive the maximal $L^{q}-L^{p}$ regularity for the Stokes problem with non-homogeneous Robin boundary conditions by applying Weis's operator-valued Fourier multiplier theorem to the concrete representation formulas of solutions to the Stokes problem as well as a localization procedure. 

\vspace{0.25cm}

The organization of the paper is as follows. In Section \ref{Stokes-Fractional}  we recall some properties of the Stokes operator with the boundary condition \eqref{Navierbc} that are crucial in our work. We study also the complex and fractional powers of the Stokes operator with the corresponding boundary conditions. We prove our results in the case of a strictly star-shaped domain. In the general case where $\Omega$ is not star-shaped, we recover $\Omega$ (cf. \cite{Ber}),  by a finite number of star open domains of class $C^{2,1}$ and apply the above argument to each of these sets to derive the desired result on the entire domain. 
The proof of this general case is done in Section \ref{appendix}. In Section \ref{Applications} we apply the results of the previous section to Problem \eqref{StokesProblem}-\eqref{u0} and derive maximal  $L^p-L^q$ regularity result to the inhomogeneous Stokes Problem \eqref{StokesProblem}-\eqref{u0}. Finally in Section \ref{appendix}, we show that the result of Section \ref{Stokes-Fractional} hold in a bounded domain $\Omega$ of class $C^{2,1}$ not necessarily star-shaped.

\section{Stokes operator and its fractional powers}\label{Stokes-Fractional}
In this section we give some properties of the Stokes operator with Navier-slip boundary conditions \eqref{Navierbc} as well as its complex and fractional powers. These properties are crucial in the associated parabolic problem and will be used in the sequel to prove maximal regularity results for the inhomogeneous 
Stokes problem. Throughout this paper if we do not state otherwise, $p$ will be a real number such that $1<p<\infty$.

\subsection{Stokes operator}
In this subsection we introduce the different Stokes operators with different regularities in order to obtain strong, weak and very weak solutions to the Stokes Problem \eqref{StokesProblem}.

First we consider the Stokes operator with the boundary conditions \eqref{Navierbc} on  the space $\boldsymbol{L}^{p}_{\sigma,\tau}(\Omega)$ given by 
\begin{equation}\label{Lpsigmatau}
\boldsymbol{L}^{p}_{\sigma,\tau}(\Omega)=\Big\{\boldsymbol{f}\in\boldsymbol{L}^{p}(\Omega);\,\,\,\mathrm{div}\boldsymbol{f}=0\,\,\mathrm{in}\,\,\Omega,\,\, \boldsymbol{f}\cdot\boldsymbol{n}=0\,\,\mathrm{on}\,\,\Gamma\Big\}
\end{equation} 
and we denote it by $A_{p}$. The trace value in \eqref{Lpsigmatau} is justified (see below). Thanks to \cite[Section 3]{Rejaiba} we know that the operator $A_{p}$  is a closed linear densely defined operator on $\boldsymbol{L}^{p}_{\sigma,\tau}(\Omega)$ defined as follows
\begin{equation}\label{c2dpa}
\mathbf{D}(A_{p})= \Big\{ \boldsymbol{u} \in
\boldsymbol{W}^{2,p}(\Omega);\,\,\mathrm{div}\,\boldsymbol{u}=
0\,\, \mathrm{in}\,\,\Omega,
\,\,\boldsymbol{u}\cdot\boldsymbol{n}=0,\,\,\left[\mathbb{D}(\boldsymbol{u})\textbf{\textit{n}}\right]_{\boldsymbol{\tau}}=\boldsymbol{0}
\,\,\mathrm{on}\,\,\Gamma \Big\},
\end{equation}
\begin{equation}\label{defistokesoperator}
\forall\,\boldsymbol{u}\in\mathbf{D}(A_{p}),\quad A_{p}\boldsymbol{u}=-P\Delta\boldsymbol{u}\quad\mathrm{in}\,\,\Omega.
\end{equation}
The operator $P$ in \eqref{defistokesoperator}, is the Helmholtz projection  $P:\boldsymbol{L}^{p}(\Omega)\longmapsto\boldsymbol{L}^{p}_{\sigma,\tau}(\Omega),$ defined by
\begin{equation}\label{helmholtzproj}
\forall\,\boldsymbol{f}\in\boldsymbol{L}^{p}(\Omega),\quad P\boldsymbol{f}\,=\,\boldsymbol{f}\,-\,\boldsymbol{\mathrm{grad}}\,\pi,
\end{equation}
where $\pi\in W^{1,p}(\Omega)/\mathbb{R}$ is the unique solution of the following weak Neuman Problem (cf. \cite{Si}):
\begin{equation}\label{wn.1}
\mathrm{div}\,(\boldsymbol{\mathrm{grad}}\,\pi\,-\,\boldsymbol{f})=0\qquad
\mathrm{in}\,\,\Omega,\qquad
(\boldsymbol{\mathrm{grad}}\,\pi\,-\,\boldsymbol{f})\cdot\boldsymbol{n}=0\qquad\mathrm{on}\,\,\Gamma.
\end{equation} 

An easy computation shows that 
$$\forall\,\,\boldsymbol{u}\in\mathbf{D}(A_{p}),\qquad
A_{p}\boldsymbol{u}\,=\,-\Delta\boldsymbol{u}\,+\boldsymbol{\mathrm{grad}}\,\pi\quad\mathrm{in}\,\,\Omega,$$
where $\pi\in W^{-1,p}(\Omega)/\mathbb{R}$ is the unique solution up to an additive constant of
the problem
\begin{equation*}
\mathrm{div}(\boldsymbol{\mathrm{grad}}\,\pi\,-\,\Delta\boldsymbol{u})=0\qquad\mathrm{in}\,\,\Omega,\qquad
(\boldsymbol{\mathrm{grad}}\,\pi\,-\,\Delta\boldsymbol{u})\cdot\boldsymbol{n}=0\qquad\textrm{on}\,\,\,\Gamma.
\end{equation*}

We note that, (cf. \cite{REJAIBA} and \cite{wata}), in the general case the Stokes operator $A_{p}$ has a non-trivial kernel, we denote this Kernel by $\boldsymbol{\mathcal{T}}^{p}(\Omega)$. When the domain $\Omega$ is obtained by rotation around a vector $\textbf{\textit{b}}\in \mathbb{R}^{3}$, then $$\boldsymbol{\mathcal{T}}^{p}(\Omega)=\mathrm{Span}\{\boldsymbol{\beta}\},\qquad\boldsymbol{\beta}(\textbf{\textit{x}})=\textbf{\textit{b}}\times \textbf{\textit{x}}, \quad \mathrm{for}\, \textbf{\textit{x}}\in\, \Omega.$$
 Otherwise
\begin{equation*}
\boldsymbol{\mathcal{T}}^{p}(\Omega)= \{\boldsymbol{0}\},
\end{equation*}
(see \cite{wata} for more details). The kernel $\boldsymbol{\mathcal{T}}^{p}(\Omega)$ can be characterized as follows (see \cite{REJAIBA}):
\begin{equation}\label{StokesKernelnavier}
\boldsymbol{\mathcal{T}}^{p}(\Omega)=\{\textbf{\textit{v}}\,\in \, \textbf{\textit{W}}\,^{1,p}(\Omega);\,\,\, \mathbb{D}(\textbf{\textit{v}})=\boldsymbol{0}\,\, \mathrm{in} \, \Omega,\,\,\,\mathrm{div}\, \textbf{\textit{v}}=0\,\, \mathrm{in}\,\,\,\Omega \,\,\,\mathrm{and}\,\,\, \textbf{\textit{v}}\cdot \textbf{\textit{n}}=0 \,\,\,\mathrm{on}\,\,\, \Gamma\}.
\end{equation}

 We also note that, (see \cite[Theorem 3.9]{Rejaiba}), the operator $-A_{p}$ is sectorial and generates a bounded analytic semi-group on $\boldsymbol{L}^{p}_{\sigma,\tau}(\Omega),\,$ for all $1<p<\infty$. We denote by $e^{-t A_{p}}$ the analytic semi-group associated to the operator $A_{p}$ in $\boldsymbol{L}^{p}_{\sigma,\tau}(\Omega)$.

\medskip

Next we consider the space 
\begin{equation*}
\boldsymbol{H}^{p}(\textrm{div},\Omega)\,=\,\big\{\boldsymbol{v}\in\boldsymbol{L}^{p}(\Omega);\,\,\,\textrm{div}\,\boldsymbol{v}\in\boldsymbol{L}^{p}(\Omega)\big\},
\end{equation*}
equipped with the graph norm. For every $1<p<\infty$, the space $\boldsymbol{\mathcal{D}}(\overline{\Omega})$ is dense in $\boldsymbol{H}^{p}(\textrm{div},\Omega)$ (cf. \cite[Section 2]{Cherif-Nour-M3AS} and \cite[Proposition 2.3]{Am2}). In addition, for any function $\boldsymbol{v}$ in $\boldsymbol{H}^{p}(\textrm{div},\Omega)$ the normal trace $\boldsymbol{v}\cdot\boldsymbol{n}_{\vert\Gamma}$ exists and belongs to $W^{-1/p,\,p}(\Gamma)$ and that the closure of $\boldsymbol{\mathcal{D}}(\Omega)$ in $\boldsymbol{H}^{p}(\textrm{div},\Omega)$ is equal to
 \begin{equation*}
\boldsymbol{H}^{p}_{0}(\textrm{div},\Omega)\,=\,\big\{\boldsymbol{v}\in\boldsymbol{H}^{p}(\textrm{div},\Omega);\,\,\,\boldsymbol{v}\cdot\boldsymbol{n}\,=\,0\,\,\textrm{on}\,\,\Gamma\big\}.
\end{equation*}
We have denoted by $\boldsymbol{\mathcal{D}}(\Omega)$ the set of infinitely differentiable functions with compact support in $\Omega$ and by $\boldsymbol{\mathcal{D}}(\overline{\Omega})$ the restriction to $\Omega$ of infinitely differentiable functions with compact support in $\mathbb{R}^{3}$. The dual space $[\boldsymbol{H}^{p}_{0}(\textrm{div},\Omega)]'$ of $\boldsymbol{H}^{p}_{0}(\textrm{div},\Omega)$ can be characterized as follows (cf. \cite[Proposition 1.0.4]{Seloulathese}): A distribution $\boldsymbol{f}$ belongs to
$[\boldsymbol{H}^{p}_{0}(\textrm{div},\Omega)]'$
 if and only if there exist
 $\boldsymbol{\psi}\in\boldsymbol{L}^{p'}(\Omega)$ and $\chi\in
 L^{p'}(\Omega)$ such that
 $\boldsymbol{f}\,=\,\boldsymbol{\psi}\,+\,\boldsymbol{\mathrm{grad}}\,\chi$
 and
 \begin{equation*}
\|\boldsymbol{f}\|_{[\boldsymbol{H}^{p}_{0}(\textrm{div},\Omega)]'}\,=\,\inf_{\boldsymbol{f}\,=\,\boldsymbol{\psi}\,+\,\boldsymbol{\mathrm{grad}}\,\chi}\max\,(\|\boldsymbol{\psi}\|_{\boldsymbol{L}^{p'}(\Omega)}\,,\,\|\chi\|_{\boldsymbol{L}^{p'}(\Omega)}).
 \end{equation*}

We consider the following space :
\begin{equation*}
[\boldsymbol{H}^{p'}_{0}(\mathrm{div},\Omega)]'_{\sigma, \tau}=\{\boldsymbol{f}\in[\boldsymbol{H}^{p'}_{0}(\mathrm{div},\Omega)]';\,\,\mathrm{div}\,\boldsymbol{f}=0\quad \mathrm{in}\,\, \Omega \quad \mathrm{and}\quad \boldsymbol{f}\cdot \boldsymbol{n}=0\quad \mathrm{on}\,\,\Gamma\}.
\end{equation*}
We note that (cf. \cite{ARMA}), for a function $\boldsymbol{f}$ in the dual space $[\boldsymbol{H}^{p'}_{0}(\mathrm{div},\Omega)]'$ such that $\,\mathrm{div}\,\boldsymbol{f}\in L^{p}(\Omega),$ the normal trace value $\boldsymbol{f}\cdot\boldsymbol{n}_{\vert\Gamma}$ exists and belongs to the space $\boldsymbol{W}^{-1-1/p,p}(\Gamma).$

The Stokes operator $A_{p}$ can be extended to the space $[\boldsymbol{H}^{p'}_{0}(\mathrm{div},\Omega)]'_{\sigma, \tau}$ (cf. \cite[section 3.2]{Rejaiba}). This extension is a closed linear densely defined operator 
$$B_{p}\,:\,\mathbf{D}(B_{p})\subset[\boldsymbol{H}^{p'}_{0}(\mathrm{div},\Omega)]'_{\sigma, \tau}\longmapsto[\boldsymbol{H}^{p'}_{0}(\mathrm{div},\Omega)]'_{\sigma, \tau},$$ 
\begin{equation}\label{DpB} 
\mathbf{D}(B_{p})=\big\{
\boldsymbol{u}\in\boldsymbol{W}^{1,p}(\Omega);\,\,\,\mathrm{div}\,\boldsymbol{u}=0\,\,\textrm{in}\,\,\Omega,\,\,\,
\boldsymbol{u}\cdot\boldsymbol{n}\,=\,0,\,\,\,\left[\mathbb{D}(\textbf{\textit{u}})\textbf{\textit{n}}\right]_{\boldsymbol{\tau}}\,=\,\boldsymbol{0}\,\,\textrm{on}\,\,\Gamma\big\}
\end{equation}
and 
\begin{equation}\label{operatorB}
\forall\,\,\boldsymbol{u}\in\mathbf{D}(B_{p}),\quad
B_{p}\boldsymbol{u}\,=\,-\Delta\boldsymbol{u}\,+\boldsymbol{\mathrm{grad}}\,\pi \quad\mathrm{in}\,\,\Omega,
\end{equation}
where $\,\pi\in L^{p}(\Omega)/\mathbb{R}$ is the unique solution up to an additive constant of
the problem
\begin{equation*}
\mathrm{div}(\boldsymbol{\mathrm{grad}}\,\pi\,-\,\Delta\boldsymbol{u})=0\qquad\mathrm{in}\,\,\Omega,\qquad
(\boldsymbol{\mathrm{grad}}\,\pi\,-\,\Delta\boldsymbol{u})\cdot\boldsymbol{n}=0\qquad\textrm{on}\,\,\,\Gamma.
\end{equation*}
The operator $-B_{p}$ generates a bounded analytic semi-group on $\,[\boldsymbol{H}^{p'}_{0}(\mathrm{div},\Omega)]'_{\sigma, \tau},\,$ for all $\,1<p<\infty$ (see \cite[Theorem 3.10]{Rejaiba}).
We note that the trace value $\left[\mathbb{D}(\textbf{\textit{u}})\textbf{\textit{n}}\right]_{\boldsymbol{\tau}}$ for a function $\boldsymbol{u}$ in \eqref{DpB} is justified by that, ( see \cite[Lemma 2.4]{REJAIBA}), for a function $\boldsymbol{u}\in\boldsymbol{W}^{1,p}(\Omega)$ such that $\Delta\boldsymbol{u}\in[\boldsymbol{H}^{p'}_{0}(\mathrm{div},\Omega)]',$ the trace value $\left[\mathbb{D}(\textbf{\textit{u}})\textbf{\textit{n}}\right]_{\boldsymbol{\tau}}$ exists and belongs to $\boldsymbol{W}^{\frac{-1}{p},p}(\Gamma)$. 

\medskip

Consider also the following space: 
\begin{equation}\label{tp}
\boldsymbol{T}^{p}(\Omega)=\big\{\boldsymbol{v}\in\boldsymbol{H}^{p}_{0}(\mathrm{div},\Omega);\,\,\,\mathrm{div}\,\boldsymbol{v}\in
{W}^{1,p}_{0}(\Omega)\big\}.
\end{equation}
Thanks to \cite[Lemmas 4.11, 4.12]{Amrouche1} we
know that $\boldsymbol{\mathcal{D}}(\Omega)$ is dense in
$\boldsymbol{T}^{p}(\Omega)$ and a distribution $\boldsymbol{f}\in(\boldsymbol{T}^{p}(\Omega))'$ if and only if there exist $\boldsymbol{\psi}\in\boldsymbol{L}^{p'}(\Omega)$ and $\mathit{f}_{0}\in W^{-1,p'}(\Omega)$ such that $\boldsymbol{f}=\boldsymbol{\psi}\,+\,\nabla\mathit{f}_{0},\,$ with
 \begin{equation*}
\Vert\boldsymbol{f}\Vert_{(\boldsymbol{T}^{p}(\Omega))'}=\min_{\boldsymbol{f}=\boldsymbol{\psi}\,+\,\nabla\mathit{f}_{0}}\max(\,\Vert\boldsymbol{\psi}\Vert_{\boldsymbol{L}^{p'}(\Omega)}\,,\,\Vert\mathit{f}_{0}\Vert_{W^{-1,p'}(\Omega)}). 
 \end{equation*}
We consider the following subspace
\begin{equation*}
[\boldsymbol{T}^{p'}(\Omega)]'_{\sigma, \tau}=\{\boldsymbol{f}\in(\boldsymbol{T}^{p'}(\Omega))';\,\,\mathrm{div}\,\boldsymbol{f}=0\quad \mathrm{in}\,\, \Omega \quad \mathrm{and}\quad \boldsymbol{f}\cdot \boldsymbol{n}=0\quad \mathrm{on}\,\,\Gamma\}.
\end{equation*}
We recall that (cf. \cite{ARMA}), for a function $\boldsymbol{f}$ in the dual space $[\boldsymbol{T}^{p'}(\Omega)]'$ such that $\mathrm{div}\,\boldsymbol{f}\in L^{p}(\Omega)$ the normal trace $\boldsymbol{f}\cdot\boldsymbol{n}_{\vert\Gamma}$ exists and belongs to $\,\boldsymbol{W}^{-2-1/p,p}(\Gamma)\,.$

The Stokes operator with Navier slip boundary condition can also be extended to the space $[\boldsymbol{T}^{p'}(\Omega)]'_{\sigma, \tau}$ (see \cite[Section 3.3]{Rejaiba}). This extension is a densely defined closed linear operator
$$\,C_{p}\,:\,\mathbf{D}(C_{p})\subset [\boldsymbol{T}^{p'}(\Omega)]'_{\sigma, \tau}\longmapsto[\boldsymbol{T}^{p'}(\Omega)]'_{\sigma, \tau},\quad\mathrm{where}$$ 
\begin{multline}\label{DpC} 
\mathbf{D}(C_{p})=\big\{
\boldsymbol{u}\in\boldsymbol{L}^{p}(\Omega);\,\,\,\mathrm{div}\,\boldsymbol{u}=0\,\,\textrm{in}\,\,\Omega,\,\,
\boldsymbol{u}\cdot\boldsymbol{n}\,=\,0,\,\,\,\left[\mathbb{D}(\textbf{\textit{u}})\textbf{\textit{n}}\right]_{\boldsymbol{\tau}}\,=\,\boldsymbol{0}\,\,\textrm{on}\,\,\Gamma\big\}
\end{multline}
 and for all $\,\boldsymbol{u}\in\mathbf{D}(C_{p}),\,\,
C_{p}\boldsymbol{u}\,=\,-\Delta\boldsymbol{u}\,+\boldsymbol{\mathrm{grad}}\,\pi\,$ in $\Omega,$
with $\pi\in W^{-1,p}(\Omega)/\mathbb{R}$ the unique solution up to an additive constant of 
the problem
\begin{equation*}\label{stokesnavierveryweak}
\mathrm{div}(\boldsymbol{\mathrm{grad}}\,\pi\,-\,\Delta\boldsymbol{u})=0\qquad\mathrm{in}\,\,\Omega,\qquad
(\boldsymbol{\mathrm{grad}}\,\pi\,-\,\Delta\boldsymbol{u})\cdot\boldsymbol{n}=0\qquad\textrm{on}\,\,\,\Gamma.
\end{equation*}
The operator $-C_{p}$ generates a bounded analytic semi-group on $[\boldsymbol{T}^{p'}(\Omega)]'_{\sigma, \tau}$ for all $1<p<\infty$ (see \cite[Theorem 3.12]{Rejaiba}). We recall that, (see \cite[Lemma 5.4]{REJAIBA}), for a function $\boldsymbol{u}\in\boldsymbol{L}^{p}(\Omega)$ such that $\Delta\boldsymbol{u}\in (\boldsymbol{T}^{p'}(\Omega))',$ the trace value $\left[\mathbb{D}(\textbf{\textit{u}})\textbf{\textit{n}}\right]_{\boldsymbol{\tau}}$ exists and belongs to $\boldsymbol{W}^{-1-\frac{1}{p},p}(\Gamma)$. This give a meaning to the trace $\left[\mathbb{D}(\textbf{\textit{u}})\textbf{\textit{n}}\right]_{\boldsymbol{\tau}}$ of a function $\boldsymbol{u}$ in \eqref{DpC}.

\vspace{0.5cm}

\subsection{Fractional powers of the Stokes operator}
This subsection is devoted to the study of the complex and the fractional powers of the Stokes operators $A_{p}$ on $\boldsymbol{L}^{p}_{\sigma,\tau}(\Omega)$.  Since the Stokes operator $A_{p}$ in $\boldsymbol{L}^{p}_{\sigma,\tau}(\Omega)$ generates a bounded analytic semi-group, it is in particular a non-negative operator. It then follows from the results in \cite{Ko} and in \cite{Tri} that its complex and fractional powers $A^{\alpha}_{p}$, $\alpha\in\mathbb{C}$, are well, densely defined and  closed linear operators on $\boldsymbol{L}^{p}_{\sigma,\tau}(\Omega)$ with domain $\mathbf{D}(A^{\alpha}_{p})$. Furthermore, using the fact that 
\begin{equation*}
\forall\alpha\in\mathbb{C},\qquad\boldsymbol{\mathcal{D}}_{\sigma}(\Omega)\hookrightarrow\mathbf{D}(A^{\alpha}_{p})\hookrightarrow\boldsymbol{L}^{p}_{\sigma,\tau}(\Omega),
\end{equation*}
one has the density of $\boldsymbol{\mathcal{D}}_{\sigma}(\Omega)$ in $\mathbf{D}(A^{\alpha}_{p})$ for all $\alpha\in\mathbb{C}$. We have denoted by $\mathbb{C}$ the set of complex number and $\mathbb{C}^{\ast}=\mathbb{C}\setminus \{0\}$.

Nevertheless as described above since the Stokes operator $A_{p}$ is not invertible with bounded inverse, its complex powers can not be expressed through an integral formula and it is not easy in general to compute calculus inequality involving these powers. To avoid this difficulty we prove the desired results for the operator $(I+A_{p})$. We start by the following proposition


\begin{prop}\label{I+lap}
$\mathbf{(i)}$ There exists an angle $0<\theta_{0}<\pi/2$  and a constant $\kappa(\Omega,p)$ depending only on $\Omega$ and $p$ such that 
the resolvent set of the operator $-(I+A_{p})$ contains the sector
\begin{equation}\label{sector}
\Sigma_{\theta_{0}}=\big\{\lambda\in\mathbb{C};\,\,\,\vert\arg\lambda\vert\leq\pi-\theta_{0}\big\}.
\end{equation}
Moreover the following estimate holds 
\begin{equation}\label{estplus}
\forall\,\lambda\in\Sigma_{\theta_{0}},\,\,\,\lambda\neq 0,\qquad \Vert(\lambda\,I+I+A_{p})^{-1}\Vert_{\mathcal{L}(\boldsymbol{L}^{p}_{\sigma,\tau}(\Omega))}\leq\,\frac{\kappa(\Omega,p)}{\vert\lambda\vert}.
\end{equation}

\noindent $\mathbf{(ii)}$ Let $0<\alpha<1$ be fixed, then for all $\lambda\in\Sigma_{\theta_{0}}$ such that $\lambda\neq 0$ and $\vert\lambda\vert\leq\frac{1}{2\,\kappa(\Omega,p)}$ one has
\begin{equation}\label{estalpha}
\Vert(\lambda\,I+I+A_{p})^{-1}\Vert_{\mathcal{L}(\boldsymbol{L}^{p}_{\sigma,\tau}(\Omega))}\leq\,2^{\alpha}\,\kappa^{\alpha}(\Omega,p)\,\vert\lambda\vert^{\alpha-1}.
\end{equation}
\end{prop}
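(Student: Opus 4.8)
The plan is to derive everything from the sectoriality of $A_{p}$. Since $-A_{p}$ generates a bounded analytic semi-group on $\boldsymbol{L}^{p}_{\sigma,\tau}(\Omega)$ (see \cite[Theorem 3.9]{Rejaiba}), the operator $A_{p}$ is sectorial of some angle strictly less than $\pi/2$; hence there exist an angle $\theta_{0}\in(0,\pi/2)$ and a constant $M=M(\Omega,p)\geq 1$ such that, with $\Sigma_{\theta_{0}}$ as in \eqref{sector},
\begin{equation}\label{eq:sect-est}
\forall\,\mu\in\Sigma_{\theta_{0}},\ \mu\neq 0,\qquad \|(\mu I+A_{p})^{-1}\|_{\mathcal{L}(\boldsymbol{L}^{p}_{\sigma,\tau}(\Omega))}\leq\frac{M}{|\mu|}.
\end{equation}
For $\mathbf{(i)}$ I would use the shift identity $\lambda I+I+A_{p}=(\lambda+1)I+A_{p}$ and apply \eqref{eq:sect-est} with $\mu=\lambda+1$. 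Two elementary facts about the geometry of the sector are needed: first, $\Sigma_{\theta_{0}}$ is invariant under translation by $1$, so that $\lambda\in\Sigma_{\theta_{0}}$ forces $\lambda+1\in\Sigma_{\theta_{0}}$, and in particular $\lambda+1\neq 0$; second, the distance from $-1$ to the closed sector $\Sigma_{\theta_{0}}$ equals $\sin\theta_{0}$, which yields $|\lambda+1|\geq(\sin\theta_{0})\,|\lambda|$ for every $\lambda\in\Sigma_{\theta_{0}}$. Combining these with \eqref{eq:sect-est} gives $\|(\lambda I+I+A_{p})^{-1}\|\leq M/|\lambda+1|\leq (M/\sin\theta_{0})\,|\lambda|^{-1}$, so \eqref{estplus} holds with $\kappa(\Omega,p)=M/\sin\theta_{0}$.

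The two geometric facts are where I would spend care, although both are routine. A point lies outside $\Sigma_{\theta_{0}}$ exactly when it lies in the open sector of half-angle $\theta_{0}$ about the negative real axis, a region characterised by $\mathrm{Re}<0$ together with $|\mathrm{Im}|<(\tan\theta_{0})|\mathrm{Re}|$; a short computation shows this complementary region is mapped into itself by the backward shift $z\mapsto z-1$, which is equivalent to the forward invariance of $\Sigma_{\theta_{0}}$ under adding $1$. For the distance, minimising the distance from $-1$ to the boundary ray $\{t(-\cos\theta_{0},\sin\theta_{0});\,t\geq 0\}$ shows the foot of the perpendicular is attained at $t=\cos\theta_{0}>0$, giving distance $\sin\theta_{0}$; since $1/\lambda$ runs through $\Sigma_{\theta_{0}}$ as $\lambda$ does (its argument is $-\arg\lambda$), one gets $|1+1/\lambda|\geq\sin\theta_{0}$, that is $|\lambda+1|\geq(\sin\theta_{0})\,|\lambda|$.

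For $\mathbf{(ii)}$ the idea is that, for small $|\lambda|$, the bound $\kappa/|\lambda|$ of part $\mathbf{(i)}$ is wasteful and should be replaced by a resolvent expansion about $\lambda=0$. Evaluating \eqref{eq:sect-est} at $\mu=1$ gives $\|(I+A_{p})^{-1}\|\leq M\leq\kappa(\Omega,p)$, so $I+A_{p}$ is boundedly invertible. Writing $B=I+A_{p}$ and factoring $\lambda I+B=B(I+\lambda B^{-1})$, a Neumann series is available whenever $|\lambda|\,\|B^{-1}\|<1$ and yields $\|(\lambda I+I+A_{p})^{-1}\|\leq \|B^{-1}\|/(1-|\lambda|\,\|B^{-1}\|)$. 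Under the hypothesis $|\lambda|\leq 1/(2\kappa(\Omega,p))$ one has $|\lambda|\,\|B^{-1}\|\leq|\lambda|\,\kappa(\Omega,p)\leq\tfrac12$, whence the $\lambda$-independent bound $\|(\lambda I+I+A_{p})^{-1}\|\leq 2\|B^{-1}\|\leq 2\kappa(\Omega,p)$.

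It then remains to convert this uniform bound into the stated power of $|\lambda|$, which is the elementary inequality $2\kappa\leq 2^{\alpha}\kappa^{\alpha}|\lambda|^{\alpha-1}$ valid for $0<\alpha<1$ and $|\lambda|\leq 1/(2\kappa)$: indeed $|\lambda|^{\alpha-1}=(1/|\lambda|)^{1-\alpha}\geq(2\kappa)^{1-\alpha}$ because $1-\alpha>0$ and $1/|\lambda|\geq 2\kappa$, so that $2^{\alpha}\kappa^{\alpha}|\lambda|^{\alpha-1}\geq 2^{\alpha}\kappa^{\alpha}(2\kappa)^{1-\alpha}=2\kappa$. Combining with the previous paragraph yields \eqref{estalpha}. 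I expect the only genuine obstacle to be the geometric bookkeeping for the sector in part $\mathbf{(i)}$; once \eqref{eq:sect-est} and the inequality $|\lambda+1|\geq(\sin\theta_{0})\,|\lambda|$ are in hand, both parts reduce to the short algebraic manipulations above.
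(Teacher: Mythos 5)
Your argument is correct, and part (ii) is in substance the same as the paper's: both reduce to the bound $\Vert(I+A_{p})^{-1}\Vert\leq\kappa$ and absorb the perturbation $\lambda\boldsymbol{u}$ for $|\lambda|\leq 1/(2\kappa)$ to get the uniform bound $2\kappa$ (the paper writes this as the a priori estimate $\Vert\boldsymbol{u}\Vert\leq\kappa\Vert\boldsymbol{f}-\lambda\boldsymbol{u}\Vert$ from the resolvent problem, you as a Neumann series --- same computation), followed by the same elementary inequality $2\kappa\leq 2^{\alpha}\kappa^{\alpha}|\lambda|^{\alpha-1}$. Where you genuinely diverge is part (i). The paper only uses the resolvent estimate for $A_{p}$ on the closed right half-plane (its Theorem 3.8 of \cite{Rejaiba}), observes that $|\lambda+1|\geq|\lambda|$ there, and then invokes the abstract extension theorem of \cite[Ch.~VIII, Thm.~1]{Yo} to enlarge the half-plane to a sector $\Sigma_{\theta_{0}}$; the angle $\theta_{0}$ is thus produced by Yosida's theorem and is not explicit. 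You instead start from the full sectorial estimate $\Vert(\mu I+A_{p})^{-1}\Vert\leq M/|\mu|$ on $\Sigma_{\theta_{0}}$ (which is equally available, being the content of \cite[Thm.~3.9]{Rejaiba}), and obtain \eqref{estplus} by the shift $\mu=\lambda+1$ together with two elementary geometric facts --- forward invariance of $\Sigma_{\theta_{0}}$ under $z\mapsto z+1$ and the bound $|\lambda+1|\geq(\sin\theta_{0})|\lambda|$, both of which you verify correctly. This buys you an explicit constant $\kappa=M/\sin\theta_{0}$ and keeps the same angle $\theta_{0}$ as the sectoriality angle of $A_{p}$, at the cost of assuming the stronger (sector rather than half-plane) input; the paper's route needs less about $A_{p}$ but delegates the geometry to an abstract theorem and loses track of the constants. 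Both are complete proofs.
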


\begin{proof}
\textbf{(i) } Since the operator $-A_{p}$ generates a bounded analytic semi-group on $\boldsymbol{L}^{p}_{\sigma,\tau}(\Omega)$, then the operator $I+A_{p}$ is an isomorphism from $\mathbf{D}(A_{p})\subset\boldsymbol{L}^{p}_{\sigma,\tau}(\Omega)$ in $\boldsymbol{L}^{p}_{\sigma,\tau}(\Omega)$. We recall that $\mathbf{D}(A_{p})$ is given by \eqref{c2dpa}. Let $\lambda\in\mathbb{C}^{\ast}$ such that $\mathrm{Re}\,\lambda\geq 0$. It is clear that the operator $\lambda\,I+I+A_{p}$ is an isomorphism from $\mathbf{D}(A_{p})$ to $\boldsymbol{L}^{p}_{\sigma,\tau}(\Omega)$. Using \cite[Theorem 3.8]{Rejaiba}, one has since $\mathrm{Re}\,\lambda\geq 0$ 
\begin{equation}\label{estresolI+Ap}
\Vert(\lambda\,I+I+A_{p})^{-1}\Vert_{\mathcal{L}(\boldsymbol{L}^{p}_{\sigma,\tau}(\Omega))}\,\leq\,\frac{\kappa(\Omega,p)}{\vert\lambda\,+\,1\vert}\,\leq\,\frac{\kappa(\Omega,p)}{\vert\lambda\vert},
\end{equation}
where the constant $\kappa(\Omega,p)$ is independent of $\lambda$.
This means that the resolvent set of the operator $-(I+A_{p})$ contains the set $\{\lambda\in\mathbb{C}^{\ast};\,\,\mathrm{Re}\,\lambda\geq 0\}$ where the estimate (\ref{estresolI+Ap}) is satisfied . Using the result of \cite[Chapter VIII, Theorem 1]{Yo}, we deduce that  there exists an angle $0<\theta_{0}<\pi/2$, such that the resolvent set of $-(I+A_{p})$ contains the sector $\Sigma_{\theta_{0}}$. In addition for every $\lambda\in\Sigma_{\theta_{0}}$ such that $\lambda\neq 0$ estimate (\ref{estplus}) holds.

\medskip

\noindent\textbf{(ii)} Let $0<\alpha<1$ be fixed and $\lambda\in\Sigma_{\theta_{0}}$ such that $\lambda\neq 0$ and $\vert\lambda\vert\leq\,\frac{1}{2\,\kappa(\Omega,p)}$. 
Let also $\boldsymbol{f}\in\boldsymbol{L}^{p}_{\sigma,\tau}(\Omega)\,$ and $\boldsymbol{u}\in\mathbf{D}(A_{p})$ solution to $(\lambda\,I+I+A_{p})^{-1}\boldsymbol{f}=\boldsymbol{u}$. Observe that $\boldsymbol{u}$ satisfies
\begin{equation*}
\left\{
\begin{array}{cccc}
\boldsymbol{u}-\Delta\boldsymbol{u}+\nabla\pi=\boldsymbol{f}-\lambda\,\boldsymbol{u},&\mathrm{div}\,\boldsymbol{u}=0&\textrm{in}&\Omega,\\
\boldsymbol{u}\cdot\boldsymbol{n}=0, & \left[\mathbb{D}(\boldsymbol{u})\textbf{\textit{n}}\right]_{\boldsymbol{\tau}}=\boldsymbol{0}&\textrm{on}&\Gamma.
\end{array}
\right.
\end{equation*}
Thus using \cite[Theorem 3.8]{Rejaiba} we have
\begin{equation*}
\Vert\boldsymbol{u}\Vert_{\boldsymbol{L}^{p}(\Omega)}\,\leq\,\kappa(\Omega,p)\,\Vert\boldsymbol{f}-\lambda\,\boldsymbol{u}\Vert_{\boldsymbol{L}^{p}(\Omega)}.
\end{equation*}
We also know from \cite[Remark 3.5]{Rejaiba} that
\begin{equation*}
\|\nabla \pi\|_{\textbf{\textit{L}}^{\,p}(\Omega)}\leq C \|\boldsymbol{u}\|_{\boldsymbol{W}^{\,1,p}(\Omega)}.
\end{equation*}
Due to our assumption on $\lambda$ and the fact that $2\,\kappa(\Omega,p)\vert\lambda\vert\leq\,1$, we have
\begin{equation*}
\Vert\boldsymbol{u}\Vert_{\boldsymbol{L}^{p}(\Omega)}\,\leq\,2\,\kappa(\Omega,p)\,\Vert\boldsymbol{f}\Vert_{\boldsymbol{L}^{p}(\Omega)}\,\leq\,2^{\alpha}\,\kappa^{\alpha}(\Omega,p)\,\vert\lambda\vert^{\alpha-1}\,\Vert\boldsymbol{f}\Vert_{\boldsymbol{L}^{p}(\Omega)},
\end{equation*}
which ends the proof.
\end{proof}

Next we state and prove our results on the pure imaginary powers of the operator $I+A_{p}$

\begin{theo}\label{pureimg1+lap}
Let $\theta_{0}$ as in Proposition \ref{I+lap}. There exist a constant $M>0$ such that for all $s\in\mathbb{R}$ we have
\begin{equation}\label{estimpur1+lap}
\Vert(I+A_{p})^{i\,s}\Vert_{\mathcal{L}(\boldsymbol{L}^{p}_{\sigma,\tau}(\Omega))}\,\leq\,M\,\kappa(\Omega,p)\,e^{\vert s\vert\,\theta_{0}}.
\end{equation}
\end{theo}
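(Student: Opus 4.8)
The plan is to realize the imaginary powers of the invertible sectorial operator $T:=I+A_{p}$ through a Dunford--Riesz contour integral and to read off the bound \eqref{estimpur1+lap} from the two resolvent estimates of Proposition \ref{I+lap}. By part \textbf{(i)}, the resolvent set of $-(I+A_{p})$ contains $\Sigma_{\theta_{0}}$, so the spectrum of $T$ lies in the sector $\{\mu\in\mathbb{C};\,|\arg\mu|\le\theta_{0}\}$ and stays away from the origin. Fixing $\theta_{0}<\theta'<\pi/2$ (to be sent to $\theta_{0}$ at the end) and taking the branch of $\mu^{is}$ holomorphic on $\mathbb{C}\setminus(-\infty,0]$, I would write
\[
(I+A_{p})^{is}=\frac{1}{2\pi i}\int_{\Gamma}\mu^{is}\,(\mu I-T)^{-1}\,d\mu ,
\]
where $\Gamma$ is the positively oriented boundary of $\{|\arg\mu|\le\theta'\}$, run into the origin along the two rays $\arg\mu=\pm\theta'$. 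On $\Gamma$ one has $\mu I-T=-(\lambda I+I+A_{p})$ with $\lambda=-\mu$ and $|\arg\lambda|=\pi-\theta'\le\pi-\theta_{0}$, so that $\lambda\in\Sigma_{\theta_{0}}$ and the estimates \eqref{estplus}, \eqref{estalpha} apply verbatim along the contour.

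The angular factor and the behaviour at the origin are then immediate. Since $|\mu^{is}|=e^{-s\arg\mu}\le e^{|s|\theta'}$ everywhere on $\Gamma$, the entire exponential growth in \eqref{estimpur1+lap} comes from this one elementary bound, and letting $\theta'\downarrow\theta_{0}$ produces the rate $e^{|s|\theta_{0}}$. Convergence at the origin is exactly the purpose of part \textbf{(ii)}: fixing $\alpha\in(0,1)$, on the piece of $\Gamma$ with $|\mu|\le 1/(2\kappa(\Omega,p))$ the integrand is bounded by $2^{\alpha}\kappa^{\alpha}(\Omega,p)\,e^{|s|\theta'}\,|\mu|^{\alpha-1}$, which is integrable near $0$ because $\alpha>0$. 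Thus the near-origin part of the integral is an operator bounded by a constant times $\kappa(\Omega,p)\,e^{|s|\theta_{0}}$, uniformly in $s$.

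The main obstacle is the tail $|\mu|\to\infty$. There estimate \eqref{estplus} only gives $\|(\mu I-T)^{-1}\|\le\kappa(\Omega,p)/|\mu|$, so the integrand is $O(e^{|s|\theta'}/|\mu|)$ and the integral is merely logarithmically, conditionally convergent; this is the genuine difficulty, since sectoriality alone does not force bounded imaginary powers. I would first establish the representation and the bound on the dense subspace $\boldsymbol{\mathcal{D}}_{\sigma}(\Omega)\subset\mathbf{D}(A_{p})$, truncating $\Gamma$ at $|\mu|=R$ and letting $R\to\infty$: writing $\mu^{is}=e^{is\log|\mu|}$ and integrating by parts in $\mu$ along each ray converts the oscillation into decay, the boundary terms being controlled in terms of $\|\boldsymbol{f}\|_{\boldsymbol{L}^{p}(\Omega)}$ and $1/|is+1|$ via \eqref{estplus}, so that the truncated integrals converge and are uniformly bounded by $M\,\kappa(\Omega,p)\,e^{|s|\theta_{0}}$. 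Density of $\boldsymbol{\mathcal{D}}_{\sigma}(\Omega)$ in $\boldsymbol{L}^{p}_{\sigma,\tau}(\Omega)$ together with the uniform bound then yields \eqref{estimpur1+lap} on the whole space, while the same construction identifies the resulting operator with the power $(I+A_{p})^{is}$ furnished by the calculus of \cite{Ko,Tri} recalled above. I expect essentially all the effort to lie in this tail estimate---justifying the conditional convergence and, above all, the uniformity in $s$---the angular and origin bounds being routine once Proposition \ref{I+lap} is in hand.
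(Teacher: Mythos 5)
Your framework --- the Dunford integral over the boundary of the sector, the factor $e^{\vert s\vert\theta_{0}}$ coming from $\vert(-\lambda)^{is}\vert$, and the use of \eqref{estalpha} to make the integral absolutely convergent near the origin --- is exactly the paper's. The gap is in your treatment of the tail, which you correctly single out as the crux but do not actually resolve. Integrating by parts along a ray $\mu=\rho e^{i\theta'}$ replaces $\rho^{is}(\mu I-T)^{-1}$ by $-\frac{\rho^{is+1}}{is+1}\,e^{i\theta'}(\mu I-T)^{-2}$, and by \eqref{estplus} the norm of the new integrand is still only $O(1/\rho)$: the growth of the antiderivative of $\rho^{is}$ exactly cancels the extra decay of the squared resolvent, and the same happens after any number of integrations by parts. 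Moreover the boundary term at $\vert\mu\vert=R$ behaves like $\frac{R^{is}}{is+1}\,R\,(Re^{i\theta'}I-T)^{-1}\boldsymbol{f}$, which stays bounded but oscillates as $R\to\infty$; the two rays do not cancel either, since their leading parts combine to $2\sinh(s\theta_{0})\,\rho^{is-1}$, whose integral $\int^{\infty}e^{is\ln\rho}\,\mathrm{d}\rho/\rho$ has no limit. So the truncated integrals do not converge, let alone uniformly in $s$. This is consistent with your own remark that sectoriality does not imply bounded imaginary powers, and it means that no purely resolvent-based manipulation of the contour integral at $\mathrm{Re}\,z=0$ can close the argument.

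The paper routes around this point differently: it evaluates the Dunford integral \eqref{forintimpur} only for $-\alpha<\mathrm{Re}\,z<0$, where the tail $\int^{\infty}\rho^{\mathrm{Re}\,z-1}\,\mathrm{d}\rho$ is absolutely convergent, obtains $\Vert(I+A_{p})^{z}\Vert\leq C\,e^{\vert\mathrm{Im}\,z\vert\theta_{0}}$ on that strip, and then reaches the imaginary axis by invoking the analyticity of $z\mapsto(I+A_{p})^{z}\boldsymbol{f}$ on $-1<\mathrm{Re}\,z<1$ for $\boldsymbol{f}\in\mathbf{D}(A_{p})$ (Komatsu) together with the density of $\mathbf{D}(A_{p})$ in $\boldsymbol{L}^{p}_{\sigma,\tau}(\Omega)$. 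If you rework your proof you should adopt this detour, or an equivalent regularization (for instance subtracting $(\lambda+1)^{-1}I$ from the resolvent and working on $\mathbf{D}(A_{p})$ first). Be aware that even then the delicate issue is the uniformity of the tail bound as $\mathrm{Re}\,z\to0^{-}$, since $\int_{1/2\kappa}^{\infty}\rho^{\mathrm{Re}\,z-1}\,\mathrm{d}\rho$ is of order $1/\vert\mathrm{Re}\,z\vert$; that is where the real work hides in the paper's version as well, and your proposal does not supply a substitute for it.
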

\begin{proof} The proof is done in two steps.

\noindent\textbf{(i)} Let $0<\alpha<1$ is fixed and $z\in\mathbb{C}$ such that $-1<-\alpha<\mathrm{Re}\,z<0$.  Thanks to Proposition \ref{I+lap} we know that the operator $I+A_{p}$ is a non-negative bounded operator with bounded inverse. As a result its complex powers can be expressed through the following Dunford integral formula (cf. \cite{Ko}):
\begin{equation}\label{forintimpur}
(I+A_{p})^{z}\,=\,\frac{1}{2\,\pi\,i}\,\int_{\Gamma_{\theta_{0}}}(-\lambda)^{z}\,(\lambda\,I+I+A_{p})^{-1}\,\textrm{d}\,\lambda,
\end{equation}
where
\begin{equation}\label{gammatheta0}
\Gamma_{\theta_{0}}\,=\,\{\rho\,e^{i\,(\pi-\theta_{0})};\,\,0\leq\rho\leq\infty\}\,\cup\,\{-\,\rho\,e^{i\,(\theta_{0}-\pi)};\,\,-\infty\leq\rho\leq 0\}.
\end{equation}
This means that 
\begin{multline*}
(I+A_{p})^{z}\,=\,\frac{1}{2\,\pi\,i}\,\Big[\int_{0}^{+\infty}\big(-\,\rho\,e^{i\,(\pi-\theta_{0})}\big)^{z}\big(\rho\,e^{i\,(\pi-\theta_{0})}\,I\,+\,I+A_{p}\big)^{-1}\,e^{i\,(\pi-\theta_{0})}\,\textrm{d}\rho\\-\,\int_{0}^{+\infty}\big(-\,\rho\,e^{i\,(\theta_{0}-\pi)}\big)^{z}\big(\rho\,e^{i\,(\theta_{0}-\pi)}I\,+I+A_{p}\big)^{-1}\,e^{i\,(\theta_{0}-\pi)}\textrm{d}\rho\,\Big].
\end{multline*}
In addition, we know that
$(-\lambda)^{z}\,=\,e^{z(\ln\vert\lambda\vert\,+\,i\,\mathrm{Arg}(-\lambda) )}$,
where $\mathrm{Arg}(-\lambda)$ is the principal argument of $-\lambda$. An easy computation shows that
\begin{equation}
\vert(-\lambda)\vert^{z}\,\leq\,\rho^{\mathrm{Re}\,z}\,e^{\vert\mathrm{Im}\,z\vert\,\theta_{0}}.
\end{equation}
As a result we have
\begin{equation}\label{complexpower2}
\Vert(I+A_{p})^{z}\Vert_{\mathcal{L}(\boldsymbol{L}^{p}_{\sigma,\tau}(\Omega))}\,\leq\,\frac{e^{\vert\mathrm{Im}\,z\vert\,\theta_{0}}}{2\,\pi}\,\Big[\,I_{1}\,+\,I_{2}\,\Big],
\end{equation}
with
\begin{equation*}
I_{1}\,=\,\int_{0}^{+\infty}\rho^{\mathrm{Re}\,z}\,\Vert\big(\rho\,e^{i\,(\pi-\theta_{0})}\,I\,+\,I+A_{p}\big)^{-1}\Vert_{\mathcal{L}(\boldsymbol{L}^{p}_{\sigma,\tau}(\Omega))}\,\textrm{d}\rho
\end{equation*}
and
\begin{equation*}
I_{2}\,=\,\int_{0}^{+\infty}\rho^{\mathrm{Re}\,z}\,\Vert\big(\rho\,e^{i\,(\theta_{0}-\pi)}\,I\,+\,I+A_{p}\big)^{-1}\Vert_{\mathcal{L}(\boldsymbol{L}^{p}_{\sigma,\tau}(\Omega))}\,\textrm{d}\rho.
\end{equation*}
Next, we write $I_{1}$ in the form
\begin{multline*}
I_{1}\,=\,\int_{0}^{1/2\,\kappa(\Omega,p)}\rho^{\mathrm{Re}\,z}\,\Vert\big(\rho\,e^{i\,(\pi-\theta_{0})}\,I\,+\,I+A_{p}\big)^{-1}\Vert_{\mathcal{L}(\boldsymbol{L}^{p}_{\sigma,\tau}(\Omega))}\,\textrm{d}\rho\\
+\,\int_{1/2\,\kappa(\Omega,p)}^{+\infty}\rho^{\mathrm{Re}\,z}\,\Vert\big(\rho\,e^{i\,(\pi-\theta_{0})}\,I\,+\,I+A_{p}\big)^{-1}\Vert_{\mathcal{L}(\boldsymbol{L}^{p}_{\sigma,\tau}(\Omega))}\,\textrm{d}\rho.
\end{multline*}
As a consequence, thanks to Proposition \ref{I+lap} part (ii) we have
\begin{equation*}
I_{1}\,\leq\,2^{\alpha}
\,\kappa^{\alpha}_{1}(\Omega,p)\,\int_{0}^{1/2\,\kappa(\Omega,p)}\frac{\textrm{d}\rho}{\rho^{1\,-\,\alpha\,-\,\mathrm{Re}\,z}}\,+\,\kappa(\Omega,p)\,\int_{1/2\,\kappa(\Omega,p)}^{+\infty}\frac{\textrm{d}\rho}{\rho^{1\,-\,\mathrm{Re}\,z}}.
\end{equation*}
Thanks to our assumption on $z$ we can verify that 
$$I_{1}\,<\,C(\alpha,\Omega,p), \qquad I_{2}\,<\,C(\alpha,\Omega,p),$$
 with a constant $C(\alpha,\Omega,p)$ depending on $\alpha,\,\Omega,\,p$ and independent of $\mathrm{Re}\,z$. Finally substituting in (\ref{complexpower2}) we have
\begin{equation}\label{complexpower3}
\Vert(I+A_{p})^{z}\Vert_{\mathcal{L}(\boldsymbol{L}^{p}_{\sigma,\tau}(\Omega))}\,\leq\,C\,e^{\vert\mathrm{Im}\,z\vert\,\theta_{0}},
\end{equation}
with a constant $C$ depending on $\alpha,\,\Omega,\,p$ and independent of $\mathrm{Re}\,z$.

\noindent\textbf{(ii)} Let $s\in\mathbb{R},$ to obtain estimate \eqref{estimpur1+lap}, we use the fact that for all $\boldsymbol{f}\in\mathbf{D}(A_{p}),$ the operator $(I+A_{p})^{z}\boldsymbol{f}$ is analytic in $z$, for $-1<\mathrm{Re}z<1$ (see \cite[Propositions 4.7, 4.10]{Ko}). 
Finally using the density of $\mathbf{D}(A_{p})$ in $\boldsymbol{L}^{p}_{\sigma,\tau}(\Omega)$ we obtain estimate \eqref{estimpur1+lap} for all $\boldsymbol{f}\in\boldsymbol{L}^{p}_{\sigma,\tau}(\Omega)$ .
\end{proof}

The following theorem extends Theorem \ref{pureimg1+lap} to the operator $\lambda I+A_{p}$ for all $\lambda>0$.
\begin{theo}\label{Lapimpowerproposition}
There exist an angle $\theta_{0}\in (0,\frac{\pi}{2})$ and a constant $M>0$ such that, for all $s\in  \mathbb{R}$ and for all $\lambda>0$:
\begin{equation}\label{estnuDeltaM*}
\Vert(\lambda\,I+A_{p})^{i\,s}\Vert_{\mathcal{L}(\boldsymbol{L}^{p}_{\sigma,\tau}(\Omega))}\,\leq\,M\,e^{\vert s\vert\,\theta_{0}}, 
\end{equation}
where $M$ is independent of $\lambda$.
\end{theo}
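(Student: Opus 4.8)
The plan is to deduce the statement from Theorem \ref{pureimg1+lap} by a rescaling argument, exploiting that the resolvent bound underlying Proposition \ref{I+lap} is invariant under multiplication of $A_{p}$ by a positive constant. First I would write, for $\lambda>0$,
\[
\lambda\,I + A_{p} = \lambda\big(I + \lambda^{-1}A_{p}\big),
\]
and invoke the elementary property of complex powers of a non-negative operator multiplied by a positive scalar, $(\lambda\,S)^{z}=\lambda^{z}\,S^{z}$, applied to $S=I+\lambda^{-1}A_{p}$ and $z=i\,s$. This identity follows from the Dunford representation used in the proof of Theorem \ref{pureimg1+lap} after the substitution $\mu\mapsto\lambda\mu$ together with the branch identity $(-\lambda\mu)^{z}=\lambda^{z}(-\mu)^{z}$ valid for $\lambda>0$, extended to $z=i\,s$ by analyticity. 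Since $\lambda>0$ gives $|\lambda^{i\,s}|=1$, I obtain
\[
\Vert(\lambda\,I + A_{p})^{i\,s}\Vert_{\mathcal{L}(\boldsymbol{L}^{p}_{\sigma,\tau}(\Omega))} = \Vert(I + \lambda^{-1}A_{p})^{i\,s}\Vert_{\mathcal{L}(\boldsymbol{L}^{p}_{\sigma,\tau}(\Omega))},
\]
so it suffices to bound the right-hand side uniformly in $\lambda$.

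Set $A_{p}^{(\lambda)}:=\lambda^{-1}A_{p}$. The key observation is that $A_{p}^{(\lambda)}$ obeys exactly the same resolvent estimate as $A_{p}$, with the same constant and sector. Indeed, since $-A_{p}$ generates a bounded analytic semi-group on $\boldsymbol{L}^{p}_{\sigma,\tau}(\Omega)$ (cf. \cite[Theorem 3.9]{Rejaiba}), the bound $\Vert(\mu\,I+A_{p})^{-1}\Vert\le\kappa(\Omega,p)/|\mu|$ holds for every $\mu\in\Sigma_{\theta_{0}}$ with $\mu\neq 0$. Writing $(\mu\,I+\lambda^{-1}A_{p})^{-1}=\lambda(\lambda\mu\,I+A_{p})^{-1}$ and noting that $\lambda\mu\in\Sigma_{\theta_{0}}$ whenever $\mu\in\Sigma_{\theta_{0}}$, the factor $\lambda$ cancels and I get $\Vert(\mu\,I+A_{p}^{(\lambda)})^{-1}\Vert\le\kappa(\Omega,p)/|\mu|$ with the \emph{same} $\kappa(\Omega,p)$ and the \emph{same} angle $\theta_{0}$, for all $\lambda>0$. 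Consequently Proposition \ref{I+lap}, whose proof relies only on this resolvent bound (through \cite[Theorem 3.8]{Rejaiba}), holds verbatim for $I+A_{p}^{(\lambda)}$ with constants $\kappa(\Omega,p)$ and $\theta_{0}$ independent of $\lambda$.

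Next I would repeat the proof of Theorem \ref{pureimg1+lap} with $A_{p}^{(\lambda)}$ in place of $A_{p}$. Every quantity appearing there—the Dunford contour $\Gamma_{\theta_{0}}$, the splitting of $I_{1},I_{2}$ at the threshold $1/(2\kappa(\Omega,p))$, and the final constant $C(\alpha,\Omega,p)$—depends only on $\alpha$, $\theta_{0}$ and $\kappa(\Omega,p)$, all now independent of $\lambda$. This yields
\[
\Vert(I + A_{p}^{(\lambda)})^{i\,s}\Vert_{\mathcal{L}(\boldsymbol{L}^{p}_{\sigma,\tau}(\Omega))} \le M\,\kappa(\Omega,p)\,e^{|s|\,\theta_{0}},
\]
with $M$, $\kappa(\Omega,p)$ and $\theta_{0}$ independent of $\lambda$. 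Combining this with the identity of the first step and renaming $M\,\kappa(\Omega,p)$ as $M$ gives \eqref{estnuDeltaM*}.

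The main obstacle is ensuring that no hidden $\lambda$-dependence creeps into the constants, and this is most delicate as $\lambda\to 0^{+}$, where $A_{p}^{(\lambda)}=\lambda^{-1}A_{p}$ becomes large. The whole argument rests on the sectorial bound $\kappa(\Omega,p)/|\mu|$ being available for \emph{all} $\mu\in\Sigma_{\theta_{0}}\setminus\{0\}$, arbitrarily close to the origin, rather than only for $\mathrm{Re}\,\mu$ bounded below; it is precisely this scale-invariant form of the estimate that keeps $\lambda\mu$ in the admissible sector and produces the cancellation of the factor $\lambda$ in $(\mu\,I+\lambda^{-1}A_{p})^{-1}=\lambda(\lambda\mu\,I+A_{p})^{-1}$. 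The only other point requiring care is the justification of the power factorization at $z=i\,s$, which is handled exactly as the passage from the strip $-1<\mathrm{Re}\,z<0$ to the imaginary axis in Theorem \ref{pureimg1+lap}.
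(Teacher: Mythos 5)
Your proof is correct, but it follows a genuinely different route from the paper's. The paper also reduces to bounding $(I+\mu^{2}A_{p})^{i s}$ with $\lambda=1/\mu^{2}$ and uses the same scalar identity $(\frac{1}{\mu^{2}}I+A_{p})^{i s}=\mu^{-2is}(I+\mu^{2}A_{p})^{i s}$ at the end, but it obtains the uniform bound on $(I+\mu^{2}A_{p})^{i s}$ \emph{geometrically}: assuming first that $\Omega$ is strictly star-shaped, it conjugates by the dilation $S_{\mu}\boldsymbol{f}(x)=\boldsymbol{f}(x/\mu)$, writes $I+\mu^{2}A_{p}=S_{\mu}(I+A_{p})S_{\mu}^{-1}$, and transfers the bound of Theorem \ref{pureimg1+lap} through this conjugation; the general domain then requires the covering argument and the auxiliary operator $\widetilde{A}_{p}$ of Section \ref{appendix}, precisely because cut-offs destroy the divergence-free constraint. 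You instead exploit the same scaling invariance at the purely operator-theoretic level: the sectorial estimate $\Vert(\mu I+A_{p})^{-1}\Vert\leq\kappa(\Omega,p)/\vert\mu\vert$ on $\Sigma_{\theta_{0}}$ is invariant under $A_{p}\mapsto\lambda^{-1}A_{p}$ (same $\kappa$, same sector, since $\lambda>0$ preserves arguments), so Proposition \ref{I+lap} and the Dunford-integral estimates of Theorem \ref{pureimg1+lap} rerun verbatim with constants independent of $\lambda$. This buys you a shorter argument that needs no star-shapedness, no partition of unity, and no $\widetilde{A}_{p}$ --- for this theorem your proof makes the Appendix unnecessary --- while the paper's route makes the physical dilation invariance of the Stokes system explicit at the cost of the localization machinery. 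The only caveats are ones you inherit from the paper rather than create: the extension from the strip $-\alpha<\mathrm{Re}\,z<0$ to $z=is$ by analyticity and density, and the uniformity of the constant as $\mathrm{Re}\,z\to 0^{-}$, are handled (or asserted) exactly as in step (ii) of the proof of Theorem \ref{pureimg1+lap}, so your reliance on them introduces no new gap.
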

\begin{proof}
Suppose first that $\Omega$ is strictly star shaped with respect to one of its points, then after translation in $\mathbb{R}^{3}$, we can suppose that this point is $0$. This amounts to say that 
\begin{equation*}
\forall\,\mu>1,\qquad\mu\,\overline{\Omega}\subset\Omega,\,\,\,\forall\,0\leq\mu<1\qquad\textrm{and}\,\,\,\,\overline{\Omega}\subset\mu\,\Omega.
\end{equation*} 
Here we take $\mu>1$ and we set $\Omega_{\mu}=\mu\,\Omega$. The proof is based on the use of the following scaling transformation 
\begin{equation}\label{SMU}
\forall\,x\in\Omega_{\mu},\qquad(S_{\mu}\boldsymbol{f})(x)=\boldsymbol{f}(x/\mu),\qquad\boldsymbol{f}\in\boldsymbol{L}^{p}(\Omega).
\end{equation}
Notice that 
\begin{equation*}
\mu^{2}A_{p}\,=\,S_{\mu}A_{p}\,S_{\mu}^{-1},\qquad I+\mu^{2}A_{p}\,=\,S_{\mu}(I+A_{p})\,S^{-1}_{\mu}.
\end{equation*}
Using then the integral representation \eqref{forintimpur} we can verify that,
\begin{equation*}
\forall\,z\in\mathbb{C},\quad(I+\mu^{2}\,A_{p})^{z}\,=\,S_{\mu}
(I+A_{p})^{z}S^{-1}_{\mu}.
\end{equation*}
As a result, for all $z\in\mathbb{C}$, we have
$$\Vert(I+\mu^{2}A_{p})^{z}\Vert_{\mathcal{L}(\boldsymbol{L}^{p}_{\sigma,\tau}(\Omega))}=\Vert S_{\mu}
(I+A_{p})^{z}S^{-1}_{\mu}\Vert_{\mathcal{L}(\boldsymbol{L}^{p}_{\sigma,\tau}(\Omega))}\leq\Vert(I+A_{p})^{z}\Vert_{\mathcal{L}(\boldsymbol{L}^{p}_{\sigma,\tau}(\Omega))}.$$
Using Theorem \ref{pureimg1+lap}, we deduce that there exist $0<\theta_{0} <\pi/2$ and constants $M>0$ such that : 
\begin{equation}\label{complexpower4}
\forall\,s\in\mathbb{R},\quad\Vert(I+\mu^{2}\,A_{p})^{i\,s}\Vert_{\mathcal{L}(\boldsymbol{L}^{p}_{\sigma,\tau}(\Omega))}\,\leq\,M\,e^{\vert s\vert\,\theta_{0}},
\end{equation}
where the constants $M$ in \eqref{complexpower4} is independent of $\mu$.

Observe that
\begin{equation*}
\Big(\frac{1}{\mu^{2}}\,I+A_{p}\Big)^{i\,s}\,=\,\frac{1}{\mu^{2\,i\,s}}\,(I+\mu^{2}A_{p})^{i\,s}.
\end{equation*}
As a result \eqref{estnuDeltaM*} follows.

In the general case, for a domain $\Omega\,$ of Class $C^{2,1},\,$ we use the fact that (see \cite{Ber} for instance), a bounded Lipschitz-Continuous open set is the union of a finite number of star-shaped, Lipschitz-continuous open sets. Clearly, It suffices to apply the above argument to each of these sets to derive the desired result on the entire domain. However, the divergence-free condition of a function $\boldsymbol{f}\in\boldsymbol{L}^{p}_{\sigma,\tau}(\Omega)$ is
not preserved under the cut-off procedure and this process is non-trivial. The proof of this general case is done in Section \ref{appendix} below. 
\end{proof}

The following theorem extend Theorems \ref{pureimg1+lap} and \ref{Lapimpowerproposition} to the operators $(\lambda I+B_{p})$ and $(\lambda I+C_{p}),$ $\lambda>0$, on $[\boldsymbol{H}^{p'}_{0}(\mathrm{div},\Omega)]'_{\sigma,\tau}$ and $[\boldsymbol{T}^{p'}(\Omega)]'_{\sigma,\tau}$ respectively. This result will be used in Section \ref{Applications} in order to obtain weak ans very weak solutions solutions to Problem \eqref{StokesProblem}-\eqref{u0}.  
\begin{theo}\label{PureimgI+Bp+Cp}
There exists $0<\theta_{0}<\pi/2$ and a constant $C>0$ such that for all $\lambda>0$ and for all $s\in\mathbb{R}$
\begin{equation}\label{pureimphdiv}
\Vert(\lambda I+B_{p})^{i\,s}\Vert_{\mathcal{L}([\boldsymbol{H}^{p'}_{0}(\mathrm{div},\Omega)]'_{\sigma,\tau})}\,\leq\,C\,e^{\vert s\vert\,\theta_{0}},
\end{equation}
\begin{equation}\label{pureimptp}
\Vert(\lambda I+C_{p})^{i\,s}\Vert_{\mathcal{L}([\boldsymbol{T}^{p'}(\Omega)]'_{\sigma,\tau})}\,\leq\,C\,e^{\vert s\vert\,\theta_{0}},
\end{equation}
where the constant $C$ in \eqref{pureimphdiv} and \eqref{pureimptp} is independent of $\lambda$.
\end{theo}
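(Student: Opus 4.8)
The plan is not to rerun the resolvent--Dunford--scaling machinery of Theorems~\ref{pureimg1+lap} and~\ref{Lapimpowerproposition} on the weaker spaces, but to transfer the bound already proved for $A_p$ by exploiting that $B_p$ and $C_p$ are realizations of one and the same Stokes operator on the lower-order spaces of its extrapolation scale. Comparing domains, $A_p$ sends an order-$2$ space into $\boldsymbol{L}^{p}_{\sigma,\tau}(\Omega)$, whereas $B_p$ sends an order-$1$ space into $[\boldsymbol{H}^{p'}_{0}(\mathrm{div},\Omega)]'_{\sigma,\tau}$ and $C_p$ sends $\boldsymbol{L}^{p}(\Omega)$ into $[\boldsymbol{T}^{p'}(\Omega)]'_{\sigma,\tau}$; this suggests, and the construction in \cite[Sections 3.2, 3.3]{Rejaiba} should confirm, that $[\boldsymbol{H}^{p'}_{0}(\mathrm{div},\Omega)]'_{\sigma,\tau}$ and $[\boldsymbol{T}^{p'}(\Omega)]'_{\sigma,\tau}$ are, up to equivalent norms, the spaces $\mathbf{D}((I+A_p)^{-1/2})$ and $\mathbf{D}((I+A_p)^{-1})$ of the extrapolated scale --- equivalently, the duals of $\mathbf{D}(A_{p'}^{1/2})$ and $\mathbf{D}(A_{p'})$ --- and that $B_p$, $C_p$ are the corresponding extensions of $A_p$. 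The first step is therefore to record these identifications, using the mapping, trace and density properties gathered from \cite{Rejaiba, REJAIBA} and recalled above.

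The second step is the functional-calculus transfer. Set $\Lambda_{1/2}=(I+A_p)^{1/2}$ and $\Lambda_{1}=(I+A_p)$, which are $\lambda$-independent isomorphisms of the scale intertwining $A_p$ with $B_p$ and with $C_p$, so that $\lambda I+B_p=\Lambda_{1/2}(\lambda I+A_p)\Lambda_{1/2}^{-1}$ and $\lambda I+C_p=\Lambda_{1}(\lambda I+A_p)\Lambda_{1}^{-1}$ for every $\lambda>0$; since similarity transforms pass to imaginary powers,
\[
(\lambda I+B_p)^{i\,s}=\Lambda_{1/2}\,(\lambda I+A_p)^{i\,s}\,\Lambda_{1/2}^{-1},\qquad (\lambda I+C_p)^{i\,s}=\Lambda_{1}\,(\lambda I+A_p)^{i\,s}\,\Lambda_{1}^{-1}.
\]
The crucial point is that $\Lambda_{1/2}$ and $\Lambda_{1}$ are functions of $A_p$ and so commute with $(\lambda I+A_p)^{i\,s}$; consequently, measured in the scale norms (which are equivalent to the intrinsic norms of the two dual spaces, with constants independent of $\lambda$ and $s$), the imaginary powers of $\lambda I+B_p$ and $\lambda I+C_p$ have exactly the same norm as that of $\lambda I+A_p$ on $\boldsymbol{L}^{p}_{\sigma,\tau}(\Omega)$. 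Theorem~\ref{Lapimpowerproposition} then bounds the latter by $M\,e^{\vert s\vert\,\theta_0}$ uniformly in $\lambda>0$, and this $\lambda$-uniformity passes through untouched because $\Lambda_{1/2},\Lambda_{1}$ do not depend on $\lambda$; this yields \eqref{pureimphdiv} and \eqref{pureimptp} with $C$ the product of $M$ and the two norm-equivalence constants.

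I expect the main obstacle to be the first step rather than the second: identifying the concrete solenoidal spaces $[\boldsymbol{H}^{p'}_{0}(\mathrm{div},\Omega)]'_{\sigma,\tau}$ and $[\boldsymbol{T}^{p'}(\Omega)]'_{\sigma,\tau}$, with their divergence-free and normal-trace constraints in the weak sense, as the extrapolation spaces of $A_p$, and checking that under these identifications $B_p$ and $C_p$ coincide with the operators defined in \eqref{DpB}-\eqref{operatorB} and \eqref{DpC}. Once this is in place, the transfer of bounded imaginary powers is the soft commutation argument of the preceding paragraph. Should the explicit identification of the scale prove awkward, the same conclusion can be reached by the longer route of repeating Proposition~\ref{I+lap} and Theorems~\ref{pureimg1+lap}-\ref{Lapimpowerproposition} verbatim on the two weaker spaces, using the sectoriality of $-B_p$ and $-C_p$ from \cite[Theorems 3.10, 3.12]{Rejaiba} together with the same scaling and the reduction of Section~\ref{appendix}.
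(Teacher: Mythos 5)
Your overall strategy (transfer the bound from $A_p$ to $B_p$ and $C_p$ by viewing the latter as realizations of the Stokes operator on lower rungs of its interpolation--extrapolation scale) is reasonable, but as written the argument has a genuine gap: the entire difficulty sits in your ``first step'', which you flag as the main obstacle and then do not carry out. Concretely, you need (a) that $[\boldsymbol{H}^{p'}_{0}(\mathrm{div},\Omega)]'_{\sigma,\tau}$ is, with equivalent norms, the completion of $\boldsymbol{L}^{p}_{\sigma,\tau}(\Omega)$ under $\Vert(I+A_{p})^{-1/2}\cdot\Vert_{\boldsymbol{L}^{p}(\Omega)}$ --- equivalently the dual of $\mathbf{D}(A_{p'}^{1/2})=\boldsymbol{W}^{1,p'}_{\sigma,\tau}(\Omega)$ --- and similarly that $[\boldsymbol{T}^{p'}(\Omega)]'_{\sigma,\tau}$ is the dual of $\mathbf{D}(A_{p'})$; and (b) that under these identifications $B_{p}$ and $C_{p}$ of \eqref{DpB}--\eqref{operatorB} and \eqref{DpC} coincide with the extrapolated operators. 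Neither is available off the shelf: for instance $(\boldsymbol{W}^{1,p'}_{\sigma,\tau}(\Omega))'$ versus $[\boldsymbol{H}^{p'}_{0}(\mathrm{div},\Omega)]'_{\sigma,\tau}$ requires a Helmholtz-type decomposition at the $\boldsymbol{W}^{-1,p}$ level, and $\boldsymbol{T}^{p'}(\Omega)$ does not even consist of divergence-free fields while $\mathbf{D}(A_{p'})$ carries the Navier boundary condition, so the match with $(\mathbf{D}(A_{p'}))'$ is far from automatic. Moreover your similarity relation $\lambda I+B_{p}=\Lambda_{1/2}(\lambda I+A_{p})\Lambda_{1/2}^{-1}$ only makes sense once $\Lambda_{1/2}=(I+A_{p})^{1/2}$ has been extended to an isomorphism of $\boldsymbol{L}^{p}_{\sigma,\tau}(\Omega)$ onto the weak space, which is precisely the unproved identification; so your ``second step'' is not independent of the first.

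For comparison, the paper avoids the scale identification altogether. It uses only two facts: for $\boldsymbol{f}\in\boldsymbol{L}^{p}_{\sigma,\tau}(\Omega)$ the calculi are consistent, $(\lambda I+B_{p})^{i s}\boldsymbol{f}=(\lambda I+A_{p})^{i s}\boldsymbol{f}$, and $\boldsymbol{L}^{p}_{\sigma,\tau}(\Omega)$ is dense in $[\boldsymbol{H}^{p'}_{0}(\mathrm{div},\Omega)]'_{\sigma,\tau}$; it then bounds $(\lambda I+A_{p})^{i s}\boldsymbol{f}$ in the dual norm via Theorem \ref{Lapimpowerproposition} and extends by density (the same for $C_{p}$). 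If you prefer your route, the honest fallback you mention at the end --- rerunning Proposition \ref{I+lap} and Theorems \ref{pureimg1+lap}--\ref{Lapimpowerproposition} directly on the two weaker spaces using the sectoriality of $-B_{p}$ and $-C_{p}$ from \cite{Rejaiba} --- is self-contained and would also work, but it too needs to be executed rather than invoked.
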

\begin{proof}
 It suffices to prove estimate \eqref{pureimphdiv}, estimate \eqref{pureimptp} follows in the same way. Using Theorem \ref{Lapimpowerproposition} one has for all $\lambda>0$ and for all $\boldsymbol{f}\in\boldsymbol{L}^{p}_{\sigma,\tau}(\Omega)$ :
\begin{equation}
\Vert(\lambda I+B_{p})^{i\,s}\boldsymbol{f}\Vert_{[\boldsymbol{H}^{p'}_{0}(\mathrm{div},\Omega)]'}=\Vert(\lambda I+A_{p})^{i\,s}\boldsymbol{f}\Vert_{[\boldsymbol{H}^{p'}_{0}(\mathrm{div},\Omega)]'}\leq  C\,e^{\vert s\vert\,\theta_{0}}\Vert\boldsymbol{f}\Vert_{\boldsymbol{L}^{p}(\Omega)}.
\end{equation}
Next, using the density of $\boldsymbol{L}^{p}_{\sigma,\tau}(\Omega)$ in $[\boldsymbol{H}^{p'}_{0}(\mathrm{div},\Omega)]'_{\sigma,\tau}$ (see \cite[Proposition 3.9]{ARMA}) and the Hahn-Banach theorem we can extend $(\lambda I+B_{p})^{i\,s}$ to a bounded linear operator on $[\boldsymbol{H}^{p'}_{0}(\mathrm{div},\Omega)]'_{\sigma,\tau}$ and we deduce deduce estimate \eqref{pureimphdiv}.
\end{proof}

In the case where the domain $\Omega$ is not obtained by rotation around a vector $\textbf{\textit{b}}\in \mathbb{R}^{3}$, the Stokes operator with Navier slip boundary conditions is invertible with bounded inverse. In this case we can (c.f. \cite[Lemma A2]{GiGa4}) pass to the limit in \eqref{estnuDeltaM*}, \eqref{pureimphdiv} and \eqref{pureimptp} as $\lambda$ tends to zero. As a result we deduce the following theorem.
\begin{theo}\label{PureimgApbpcp}
Suppose that the domain $\Omega$ is not obtained by rotation around a vector $\textbf{\textit{b}}\in \mathbb{R}^{3}$. There exists $0<\theta_{0}<\pi/2$ and a constant $C>0$ such that for all $\lambda>0$ and for all $s\in\mathbb{R}$
\begin{equation*}
\Vert(A_{p})^{i\,s}\Vert_{\mathcal{L}(\boldsymbol{L}^{p}_{\sigma,\tau}(\Omega))}\,\leq\,M\,e^{\vert s\vert\,\theta_{0}}, 
\end{equation*}
\begin{equation*}
\Vert(B_{p})^{i\,s}\Vert_{\mathcal{L}([\boldsymbol{H}^{p'}_{0}(\mathrm{div},\Omega)]'_{\sigma,\tau})}\,\leq\,C\,e^{\vert s\vert\,\theta_{0}},
\end{equation*}
\begin{equation*}
\Vert(C_{p})^{i\,s}\Vert_{\mathcal{L}([\boldsymbol{T}^{p'}(\Omega)]'_{\sigma,\tau})}\,\leq\,C\,e^{\vert s\vert\,\theta_{0}},
\end{equation*}
\end{theo}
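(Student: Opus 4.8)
The plan is to obtain the three estimates by passing to the limit $\lambda\to 0^{+}$ in the uniform bounds \eqref{estnuDeltaM*}, \eqref{pureimphdiv} and \eqref{pureimptp}, exploiting that under the present hypothesis on $\Omega$ the operators $A_{p}$, $B_{p}$ and $C_{p}$ are invertible with bounded inverse. First I would record this invertibility. By the characterization \eqref{StokesKernelnavier} of the kernel $\boldsymbol{\mathcal{T}}^{p}(\Omega)$ and the discussion preceding it, the assumption that $\Omega$ is not obtained by rotation around a vector $\textbf{\textit{b}}\in\mathbb{R}^{3}$ forces $\boldsymbol{\mathcal{T}}^{p}(\Omega)=\{\boldsymbol{0}\}$, so that $A_{p}$ has trivial kernel. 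Since $-A_{p}$ generates a bounded analytic semi-group on $\boldsymbol{L}^{p}_{\sigma,\tau}(\Omega)$ and $\mathbf{D}(A_{p})\subset\boldsymbol{W}^{2,p}(\Omega)$ embeds compactly into $\boldsymbol{L}^{p}(\Omega)$ on the bounded domain $\Omega$, the operator $A_{p}$ has compact resolvent; triviality of the kernel then upgrades to $A_{p}$ being an isomorphism from $\mathbf{D}(A_{p})$ onto $\boldsymbol{L}^{p}_{\sigma,\tau}(\Omega)$, i.e. invertible with bounded inverse. The same conclusion transfers to the extensions $B_{p}$ and $C_{p}$ on $[\boldsymbol{H}^{p'}_{0}(\mathrm{div},\Omega)]'_{\sigma,\tau}$ and $[\boldsymbol{T}^{p'}(\Omega)]'_{\sigma,\tau}$.

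Next I would invoke the uniform-in-$\lambda$ bounds already established. Theorem \ref{Lapimpowerproposition} provides $\Vert(\lambda I+A_{p})^{i\,s}\Vert_{\mathcal{L}(\boldsymbol{L}^{p}_{\sigma,\tau}(\Omega))}\leq M\,e^{\vert s\vert\,\theta_{0}}$ with $M$ independent of $\lambda>0$, and Theorem \ref{PureimgI+Bp+Cp} gives the analogous bounds for $(\lambda I+B_{p})^{i\,s}$ and $(\lambda I+C_{p})^{i\,s}$, again with constants independent of $\lambda$. The essential feature is that these constants do not deteriorate as $\lambda\to 0^{+}$.

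Finally, since each of $A_{p}$, $B_{p}$, $C_{p}$ is non-negative and invertible with bounded inverse, I would apply \cite[Lemma A2]{GiGa4}, which asserts precisely that for such operators the purely imaginary powers $(\lambda I+A_{p})^{i\,s}$ converge strongly to $A_{p}^{i\,s}$ on a dense subset as $\lambda\to 0^{+}$, and that any uniform operator-norm bound on the approximating family is inherited by the limit. Combined with the bounds of the previous step, this yields $\Vert A_{p}^{i\,s}\Vert\leq M\,e^{\vert s\vert\,\theta_{0}}$, $\Vert B_{p}^{i\,s}\Vert\leq C\,e^{\vert s\vert\,\theta_{0}}$ and $\Vert C_{p}^{i\,s}\Vert\leq C\,e^{\vert s\vert\,\theta_{0}}$, which are exactly the three claimed estimates.

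I expect the main obstacle to lie in the first paragraph: rigorously justifying that trivial kernel upgrades to invertibility with bounded inverse, and checking that the hypotheses of \cite[Lemma A2]{GiGa4}, namely non-negativity together with dense domain and dense range (equivalently, bounded invertibility here), are satisfied in each of the three functional-analytic settings, since $B_{p}$ and $C_{p}$ act on dual spaces rather than on $\boldsymbol{L}^{p}_{\sigma,\tau}(\Omega)$ itself. Once invertibility is secured, the limiting argument reduces to a direct citation and the transfer of the bound is automatic.
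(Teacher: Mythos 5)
Your proposal is correct and follows essentially the same route as the paper: the paper's (very brief) justification is precisely that the trivial kernel under the non-rotational hypothesis makes $A_{p}$, $B_{p}$, $C_{p}$ invertible with bounded inverse, and one then passes to the limit $\lambda\to 0^{+}$ in the uniform bounds \eqref{estnuDeltaM*}, \eqref{pureimphdiv}, \eqref{pureimptp} via \cite[Lemma A2]{GiGa4}. Your additional remarks on compact resolvent and on verifying the hypotheses of that lemma in the dual-space settings actually supply more detail than the paper itself gives.
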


\medskip

Next we study the domains of fractional powers of the operator $A_{p}$ on $\boldsymbol{L}^{p}_{\sigma,\tau}(\Omega)$.  Since the Stokes operator with the boundary conditions \eqref{Navierbc} doesn't have bounded inverse, attention should be paid in the calculus of the domains $\mathbf{D}(A^{\alpha}_{p})$ and their norms. 
It follows from \cite{Ko} that for $\mathrm{Re}\alpha>0$, the domain $\mathbf{D}(\nu\,I\,+\, A^{\alpha}_{p})$ doesn't depend on $\nu\geq0$ and coincides with $\mathbf{D}(\mu\,I\,+\,A^{\alpha}_{p})$ for $\mu\geq0$. Exactly 
\begin{equation*}
\forall\,\mu,\,\nu>0,\qquad \mathbf{D}(A^{\alpha}_{p})\,=\,\mathbf{D}(\mu\,I\,+\,A^{\alpha}_{p})\,=\,\mathbf{D}(\nu\,I\,+\,A^{\alpha}_{p}).
\end{equation*}
We also know from \cite[Theorem 1.15.3]{Tri} that the boundedness of the pure imaginary powers of the operator $(I+A_{p})$ allows us to determine the domain of definition of $\mathbf{D}(I+A_{p})^{\alpha})$, and then of $\mathbf{D}(A^{\alpha}_{p})$ for complex number $\alpha$ satisfying $\mathrm{Re}\,\alpha>0$ using complex interpolation theory. In addition for all $\alpha>0$, the map $\boldsymbol{v}\longmapsto\Vert (I+A_{p})^{\alpha}\,\boldsymbol{v}\Vert_{\boldsymbol{L}^{p}(\Omega)}$ is a norm on $\mathbf{D}(A_{p}^{\alpha})$. This is due to the fact that  (cf. \cite[Theorem 1.15.2, part (e)]{Tri}), the operator $I+A_{p}$ has a bounded inverse and thus for all $\alpha\in\mathbb{C}$ with $\mathrm{Re}\,\alpha>0$, the operator $(I+A_{p})^{\alpha}$ is an isomorphism from $\mathbf{D}(A_{p}^{\alpha})$ to $\boldsymbol{L}^{p}_{\sigma,\tau}(\Omega)$. 

Consider the space 
\begin{equation}\label{w1psigmatau}
\boldsymbol{W}^{1,p}_{\sigma,\tau}(\Omega)\,=\,\big\{ \boldsymbol{v}\in\boldsymbol{W}^{1,p}(\Omega);\,\,\,\mathrm{div}\,\boldsymbol{v}=0\,\,\,\textrm{in}\,\,\Omega,\,\,\,\boldsymbol{v}\cdot\boldsymbol{n}=0\,\,\,\textrm{on}\,\,\Gamma\big\} .
\end{equation}
The following theorem characterize the domain of $A^{1/2}_{p}$.
\begin{theo}\label{DA1/2navierslip}
For all $1<p<\infty$,
$\mathbf{D}(A_{p}^{1/2})\,=\,\boldsymbol{W}^{1,p}_{\sigma,\tau}(\Omega)$ with equivalent norms.
\end{theo}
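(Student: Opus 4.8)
The plan is to reduce the identification of $\mathbf{D}(A_p^{1/2})$ to a complex interpolation computation and then to evaluate the resulting interpolation space explicitly. First I would exploit the bounded imaginary powers obtained in Theorem \ref{pureimg1+lap}: since $I+A_p$ is invertible with bounded inverse and its purely imaginary powers satisfy the strip estimate \eqref{estimpur1+lap}, the operator $I+A_p$ belongs to the class for which \cite[Theorem 1.15.3]{Tri} applies. Recalling from the discussion preceding the theorem that $\mathbf{D}(A_p^{1/2})=\mathbf{D}((I+A_p)^{1/2})$ and that $\boldsymbol{v}\mapsto\Vert(I+A_p)^{1/2}\boldsymbol{v}\Vert_{\boldsymbol{L}^p(\Omega)}$ is an equivalent norm there, this yields, with equivalent norms,
\begin{equation*}
\mathbf{D}(A_p^{1/2})=\mathbf{D}((I+A_p)^{1/2})=\big[\boldsymbol{L}^p_{\sigma,\tau}(\Omega),\,\mathbf{D}(A_p)\big]_{1/2},
\end{equation*}
where $\mathbf{D}(A_p)$ carries the graph norm and $\mathbf{D}(I+A_p)=\mathbf{D}(A_p)$. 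The theorem is thereby reduced to proving $\big[\boldsymbol{L}^p_{\sigma,\tau}(\Omega),\,\mathbf{D}(A_p)\big]_{1/2}=\boldsymbol{W}^{1,p}_{\sigma,\tau}(\Omega)$ with equivalent norms.

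To evaluate this interpolation space I would start from the unconstrained identity $[\boldsymbol{L}^p(\Omega),\boldsymbol{W}^{2,p}(\Omega)]_{1/2}=\boldsymbol{W}^{1,p}(\Omega)$ (cf. \cite{Tri}) and track which of the constraints defining $\mathbf{D}(A_p)$ in \eqref{c2dpa} persist at the level $\theta=1/2$. For the inclusion $\subseteq$ the point is that the linear constraints $\mathrm{div}\,\boldsymbol{u}=0$ and $\boldsymbol{u}\cdot\boldsymbol{n}=0$ are each described by an operator — the divergence, and the normal trace on $\boldsymbol{H}^p(\mathrm{div},\Omega)$ — that is bounded on both endpoint spaces, so that the corresponding kernels are stable under interpolation; since at $\theta=1/2$ the interpolation space has regularity $1>1/p$, the normal trace remains meaningful and these two conditions survive. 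Consequently any element of $\big[\boldsymbol{L}^p_{\sigma,\tau}(\Omega),\mathbf{D}(A_p)\big]_{1/2}$ lies in $\boldsymbol{W}^{1,p}(\Omega)$, is divergence free and satisfies $\boldsymbol{u}\cdot\boldsymbol{n}=0$, i.e. belongs to $\boldsymbol{W}^{1,p}_{\sigma,\tau}(\Omega)$. By contrast the frictionless condition $\left[\mathbb{D}(\boldsymbol{u})\boldsymbol{n}\right]_{\boldsymbol{\tau}}=\boldsymbol{0}$ constrains a trace of the first derivatives and is only meaningful above the critical regularity $1+1/p$; at $\theta=1/2$ it is lost, which is precisely why no first–order condition appears in the target space.

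The reverse inclusion $\supseteq$ is where I expect the main obstacle. Here one must show that every $\boldsymbol{v}\in\boldsymbol{W}^{1,p}_{\sigma,\tau}(\Omega)$ is the value at $\theta=1/2$ of an admissible analytic family valued in the couple $(\boldsymbol{L}^p_{\sigma,\tau}(\Omega),\mathbf{D}(A_p))$, with norm controlled by $\Vert\boldsymbol{v}\Vert_{\boldsymbol{W}^{1,p}(\Omega)}$. I would proceed by localization and boundary flattening: away from $\Gamma$ the claim reduces to the standard solenoidal interpolation in $\mathbb{R}^3$, while near $\Gamma$ one straightens the boundary and works in the half–space model, where the relevant constraint is the vanishing of the normal component and the Navier term is genuinely a first–order condition, inactive at regularity $1$. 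An explicit family is then produced either from the analytic semigroup $t\mapsto e^{-tA_p}\boldsymbol{v}$ or from a Seeley–type extension adapted to the constraints.

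The two delicate points in this last step are: (a) preserving the divergence–free condition throughout the construction, which I would handle by composing with the Helmholtz projection $P$ of \eqref{helmholtzproj} — bounded on the whole scale and hence commuting with complex interpolation, so that the solenoidal subspaces interpolate as the full Sobolev spaces do; and (b) justifying rigorously that the frictionless condition may be disregarded at this interpolation level, which rests on the trace theory recalled above for functions whose Laplacian lies in the appropriate dual spaces. Once both inclusions and the accompanying two–sided norm estimates are established, the equivalence of $\Vert(I+A_p)^{1/2}\cdot\Vert_{\boldsymbol{L}^p(\Omega)}$ with the $\boldsymbol{W}^{1,p}$–norm on $\boldsymbol{W}^{1,p}_{\sigma,\tau}(\Omega)$ follows, which completes the proof.
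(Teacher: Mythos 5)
Your reduction to the complex interpolation space $\big[\mathbf{D}(A_p),\boldsymbol{L}^p_{\sigma,\tau}(\Omega)\big]_{1/2}$ via \cite[Theorem 1.15.3]{Tri} is exactly the paper's starting point, and your argument for the inclusion $\mathbf{D}(A_p^{1/2})\hookrightarrow\boldsymbol{W}^{1,p}_{\sigma,\tau}(\Omega)$ is workable (the paper reaches the same conclusion slightly differently, by interpolating the pair $\boldsymbol{U}=(\boldsymbol{u},\mathbb{D}(\boldsymbol{u}))$ between $\boldsymbol{L}^p_{\sigma,\tau}(\Omega)\times\boldsymbol{W}^{1,p}(\Omega)$ and $\boldsymbol{L}^p_{\sigma,\tau}(\Omega)\times\boldsymbol{W}^{-1,p}(\Omega)$ and invoking the Korn-type equivalence $\Vert\boldsymbol{u}\Vert_{\boldsymbol{W}^{1,p}}\simeq\Vert\boldsymbol{u}\Vert_{\boldsymbol{L}^p}+\Vert\mathbb{D}(\boldsymbol{u})\Vert_{\boldsymbol{L}^p}$ on $\boldsymbol{W}^{1,p}_{\sigma,\tau}(\Omega)$). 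The genuine gap is in the reverse inclusion, which you correctly identify as the crux but do not actually prove. Your plan — localization, boundary flattening, a Seeley-type extension, and an appeal to the principle that the Navier condition "disappears" at regularity $1$ — is a programme, not an argument: to place $\boldsymbol{v}\in\boldsymbol{W}^{1,p}_{\sigma,\tau}(\Omega)$ in $\big[\boldsymbol{L}^p_{\sigma,\tau}(\Omega),\mathbf{D}(A_p)\big]_{1/2}$ you must exhibit an admissible analytic family whose values on the line $\mathrm{Re}\,z=1$ lie in $\mathbf{D}(A_p)$, hence satisfy $\left[\mathbb{D}(\cdot)\boldsymbol{n}\right]_{\boldsymbol{\tau}}=\boldsymbol{0}$, and nothing in your sketch produces this. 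In particular, your fix (a) fails: composing an extension with the Helmholtz projection $P$ restores $\mathrm{div}=0$ and $\boldsymbol{u}\cdot\boldsymbol{n}=0$ but does not produce functions satisfying the frictionless condition, so it does not land in $\mathbf{D}(A_p)$ at the right endpoint; and your point (b), that the boundary condition "may be disregarded at this interpolation level," is precisely the statement to be proved, not a consequence of the trace lemmas you cite (interpolation of subspaces defined by boundary conditions is exactly where naive arguments break down).

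The paper avoids all of this with a short duality argument that you may want to adopt: for $\boldsymbol{u}\in\mathbf{D}(A_p)$ one writes $\Vert(I+A_p)^{1/2}\boldsymbol{u}\Vert_{\boldsymbol{L}^p(\Omega)}$ as a supremum of $\big\vert\langle(I+A_p)\boldsymbol{u},\boldsymbol{v}\rangle\big\vert/\Vert(I+A_{p'})^{1/2}\boldsymbol{v}\Vert_{\boldsymbol{L}^{p'}(\Omega)}$ over $\boldsymbol{v}\in\mathbf{D}(A_{p'}^{1/2})$, using that $(I+A_{p'})^{1/2}$ is onto $\boldsymbol{L}^{p'}_{\sigma,\tau}(\Omega)$ and that $\big((I+A_p)^{1/2}\big)'=(I+A_{p'})^{1/2}$. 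The numerator equals $\int_\Omega\boldsymbol{u}\cdot\overline{\boldsymbol{v}}\,\mathrm{d}x+\int_\Omega\mathbb{D}(\boldsymbol{u})\boldsymbol{:}\mathbb{D}(\overline{\boldsymbol{v}})\,\mathrm{d}x$, which is bounded by $C\Vert\boldsymbol{u}\Vert_{\boldsymbol{W}^{1,p}(\Omega)}\Vert\boldsymbol{v}\Vert_{\boldsymbol{W}^{1,p'}(\Omega)}$, and the denominator dominates $\Vert\boldsymbol{v}\Vert_{\boldsymbol{W}^{1,p'}(\Omega)}$ by the already-established forward inclusion applied with exponent $p'$. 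Density of $\mathbf{D}(A_p)$ in $\boldsymbol{W}^{1,p}_{\sigma,\tau}(\Omega)$ then gives $\boldsymbol{W}^{1,p}_{\sigma,\tau}(\Omega)\hookrightarrow\mathbf{D}(A_p^{1/2})$. This uses only the variational identity for $A_p$ and the $p\leftrightarrow p'$ symmetry, and requires no extension operators or boundary flattening.
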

\begin{proof}
Since the pure imaginary powers of the operator $(I+A_{p})$ are bounded and satisfy estimates \eqref{estimpur1+lap}, one has thanks to \cite[Theorem 1.15.3]{Tri}
\begin{eqnarray*}
\mathbf{D}(A_{p}^{1/2})\,=\,\mathbf{D}((I\,+\,A_{p})^{1/2})&=&\left[\mathbf{D}(I\,+\,A_{p});\boldsymbol{L}^{p}_{\sigma,\tau}(\Omega) \right]_{1/2}\\
&=& \left[\mathbf{D}(A_{p});\boldsymbol{L}^{p}_{\sigma,\tau}(\Omega) \right]_{1/2}.
\end{eqnarray*}
Consider a function $\boldsymbol{u}\in\mathbf{D}(A_{p})$ (see \eqref{c2dpa} for the definition of $\mathbf{D}(A_{p})$) and  set $\boldsymbol{z}=\mathbb{D}(\boldsymbol{u})$ and $\boldsymbol{U}=(\boldsymbol{u},\boldsymbol{z})$. It is clear that when $\boldsymbol{u}\in\mathbf{D}(A_{p})$, the function $\boldsymbol{z}\in\boldsymbol{W}^{1,p}(\Omega)$ and $\boldsymbol{U}\in\boldsymbol{L}^{p}_{\sigma,\tau}(\Omega)\times\boldsymbol{W}^{1,p}(\Omega)$. In addition, when $\boldsymbol{u}\in\boldsymbol{L}^{p}_{\sigma,\tau}(\Omega)$ the function $\boldsymbol{z}\in\boldsymbol{W}^{-1,p}(\Omega)$ and $\boldsymbol{U}\in\boldsymbol{L}^{p}_{\sigma,\tau}(\Omega)\times\boldsymbol{W}^{-1,p}(\Omega)$. Now, let $\boldsymbol{u}\in\mathbf{D}(A_{p}^{1/2})$, then $$\boldsymbol{U}\in\boldsymbol{L}^{p}_{\sigma,\tau}(\Omega)\times\left[\boldsymbol{W}^{1,p}(\Omega)\,;\,\boldsymbol{W}^{-1,p}(\Omega) \right]_{1/2}=\boldsymbol{L}^{p}_{\sigma,\tau}(\Omega)\times\boldsymbol{L}^{p}(\Omega).$$
As a result, $\boldsymbol{u}\in\boldsymbol{L}^{p}(\Omega)$, $\boldsymbol{z}=\mathbb{D}(\boldsymbol{u})\in\boldsymbol{L}^{p}(\Omega)$, $\mathrm{div}\,\boldsymbol{u}=0$ in $\Omega$ and $\boldsymbol{u}\cdot\boldsymbol{n}=0$ on $\Gamma$. Thanks to \cite{REJAIBA}, we know that for all $\boldsymbol{u}\in\boldsymbol{W}^{1,p}_{\sigma,\tau}(\Omega)$, the norm $\Vert\boldsymbol{u}\Vert_{\boldsymbol{W}^{1,p}(\Omega)}$ is equivalent to $\Vert\boldsymbol{u}\Vert_{\boldsymbol{L}^{p}(\Omega)}+\Vert\mathbb{D}(\boldsymbol{u})\Vert_{\boldsymbol{L}^{p}(\Omega)}$. As a result $\boldsymbol{u}\in\boldsymbol{W}^{1,p}_{\sigma,\tau}(\Omega)$ and 
$$\mathbf{D}(A_{p}^{1/2})\hookrightarrow\boldsymbol{W}^{1,p}_{\sigma,\tau}(\Omega).$$

Next we prove the second inclusion. Since $I\,+\,A_{p}$ has a bounded inverse, then for all $1<p<\infty$, the operator $(I\,+\,A_{p})^{1/2}$ is an isomorphism from $\mathbf{D}((I\,+\,A_{p})^{1/2})$ to $\boldsymbol{L}^{p}_{\sigma,\tau}(\Omega)$. This means that for all $\boldsymbol{F}\in\boldsymbol{L}^{p'}_{\sigma,\tau}(\Omega)$ there exists a unique $\boldsymbol{v}\in\mathbf{D}((I\,+\,A_{p'})^{1/2})$ solution of 
\begin{equation}\label{ap'1/2navierslip}
(I\,+\,A_{p'})^{1/2}\boldsymbol{v}\,=\,\boldsymbol{F}.
\end{equation}
Let $\boldsymbol{u}\in\mathbf{D}(A_{p})$ and observe that
\begin{eqnarray}
\Vert(I\,+\,A_{p})^{1/2}\boldsymbol{u}\Vert_{\boldsymbol{L}^{p}(\Omega)}&=&\sup _{\boldsymbol{F}\in\boldsymbol{L}^{p'}_{\sigma,\tau}(\Omega),\,\boldsymbol{F}\neq\boldsymbol{0}}\frac{\Big\vert\langle(I\,+\,A_{p})^{1/2}\boldsymbol{u}\,,\,\boldsymbol{F}\rangle_{\boldsymbol{L}^{p}_{\sigma,\tau}(\Omega)\times\boldsymbol{L}^{p'}_{\sigma,\tau}(\Omega)}\Big\vert}{\Vert\boldsymbol{F}\Vert_{\boldsymbol{L}^{p'}(\Omega)}}\nonumber\\
&=&\sup _{\boldsymbol{v}\in\mathbf{D}(A_{p'}^{1/2}),\,\boldsymbol{v}\neq\boldsymbol{0}}\frac{\Big\vert\langle(I\,+\,A_{p})\boldsymbol{u}\,,\,\boldsymbol{v}\rangle_{\boldsymbol{L}^{p}_{\sigma,\tau}(\Omega)\times\boldsymbol{L}^{p'}_{\sigma,\tau}(\Omega)}\Big\vert}{\Vert(I\,+\,A_{p'})^{1/2}\boldsymbol{v}\Vert_{\boldsymbol{L}^{p'}(\Omega)}}\nonumber\\
&=&\sup _{\boldsymbol{v}\in\mathbf{D}(A_{p'}^{1/2}),\,\boldsymbol{v}\neq\boldsymbol{0}}\frac{\Big\vert\int_{\Omega}\boldsymbol{u}\cdot\overline{\boldsymbol{v}}\,\textrm{d}\,x\,+\,\int_{\Omega}\mathbb{D}(\boldsymbol{u}):\mathbb{D}(\overline{\boldsymbol{v}})\,\textrm{d}\,x\Big\vert}{\Vert(I\,+\,A_{p'})^{1/2}\boldsymbol{v}\Vert_{\boldsymbol{L}^{p'}(\Omega)}}\nonumber\\
&\leq&C(\Omega,p)\,\Vert\boldsymbol{u}\Vert_{\boldsymbol{W}^{1,p}(\Omega)}\label{dpaw1pnavierslip}.
\end{eqnarray}
We recall that $\boldsymbol{v}$ is the unique solution of Problem \eqref{ap'1/2navierslip} and that the adjoint the operator $((I\,+\,A_{p})^{1/2})'$ of $(I\,+\,A_{p})^{1/2}$ is equal to the operator $(I\,+\,A_{p'})^{1/2}$ . We also recall that the dual of  $\boldsymbol{L}^{p}_{\sigma,\tau}(\Omega)$ is equal to $\boldsymbol{L}^{p'}_{\sigma,\tau}(\Omega)$. Using the density of $\mathbf{D}(A_{p})$ in $\boldsymbol{W}^{1,p}_{\sigma,\tau}(\Omega)$, we obtain estimate (\ref{dpaw1pnavierslip}) for all $\boldsymbol{u}\in\boldsymbol{W}^{1,p}_{\sigma,\tau}(\Omega)$ and then
\begin{equation*}
\boldsymbol{W}^{1,p}_{\sigma,\tau}(\Omega)\hookrightarrow\mathbf{D}(A_{p}^{1/2}).
\end{equation*}
\end{proof}

\begin{rmk}
\rm{ In the case where the domain $\Omega$ is not obtained by rotation around a vector $\textbf{\textit{b}}\in \mathbb{R}^{3}$, the Stokes operator $A_{p}$ is invertible with bounded inverse and the following equivalence holds
\begin{equation*}
\forall\,\boldsymbol{u}\in\mathbf{D}(A^{1/2}_{p}),\quad\Vert A_{p}^{1/2}\boldsymbol{u}\Vert_{\boldsymbol{L}^{p}(\Omega)}\,\simeq\,\Vert\mathbb{D}(\boldsymbol{u})\Vert_{\boldsymbol{L}^{p}(\Omega)}.
\end{equation*}
}
\end{rmk}

The following proposition gives us an embeddings of Sobolev type for the domains of fractional powers of the Stokes operator $A_{p}$. 
%
%
%

\begin{prop}\label{SoboembASnavierslip}
for all $1<p<\infty$ and for all $\alpha\in\mathbb{R}$ such that $0<\alpha<3/2p$ the following Sobolev embedding holds 
\begin{equation}\label{sbasnavierslip}
\mathbf{D}(A^{\alpha}_{p})\hookrightarrow\boldsymbol{L}^{q}(\Omega),\qquad\frac{1}{q}=\frac{1}{p}-\frac{2\alpha}{3}.
\end{equation}
Moreover for all $\boldsymbol{u}\in\mathbf{D}(A^{\alpha}_{p})$ the following estimate holds
\begin{equation}\label{estasnavierslip}
\Vert\boldsymbol{u}\Vert_{\boldsymbol{L}^{q}(\Omega)}\,\leq\,C(\Omega,p)\,\Vert (I+A_{p})^{\alpha}\boldsymbol{u}\Vert_{\boldsymbol{L}^{p}(\Omega)}.
\end{equation}
In the particular case where the domain $\Omega$ is not obtained by rotation around a vector $\textbf{\textit{b}}\in \mathbb{R}^{3}$, the following estimate holds
\begin{equation}\label{estasnavierslipinvertible}
\Vert\boldsymbol{u}\Vert_{\boldsymbol{L}^{q}(\Omega)}\,\leq\,C(\Omega,p)\,\Vert A_{p}^{\alpha}\boldsymbol{u}\Vert_{\boldsymbol{L}^{p}(\Omega)}.
\end{equation}
\end{prop}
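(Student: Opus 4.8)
The plan is to identify $\mathbf{D}(A_{p}^{\alpha})$ with a complex interpolation space and then combine the resulting fractional Sobolev regularity with the classical Sobolev embedding. The starting point is that, by Theorem \ref{pureimg1+lap}, the operator $I+A_{p}$ has bounded pure imaginary powers, so by \cite[Theorem 1.15.3]{Tri} (exactly as in the proof of Theorem \ref{DA1/2navierslip}) one has, for $0<\alpha<1$,
\[
\mathbf{D}(A_{p}^{\alpha})=\mathbf{D}((I+A_{p})^{\alpha})=\big[\boldsymbol{L}^{p}_{\sigma,\tau}(\Omega),\mathbf{D}(A_{p})\big]_{\alpha}.
\]
First I would use the continuous inclusions $\boldsymbol{L}^{p}_{\sigma,\tau}(\Omega)\hookrightarrow\boldsymbol{L}^{p}(\Omega)$ and $\mathbf{D}(A_{p})\hookrightarrow\boldsymbol{W}^{2,p}(\Omega)$, the latter being built into the definition \eqref{c2dpa}, together with the monotonicity of the complex interpolation functor with respect to the couples, to obtain
\[
\mathbf{D}(A_{p}^{\alpha})\hookrightarrow\big[\boldsymbol{L}^{p}(\Omega),\boldsymbol{W}^{2,p}(\Omega)\big]_{\alpha}=\boldsymbol{H}^{2\alpha,p}(\Omega),
\]
where $\boldsymbol{H}^{2\alpha,p}(\Omega)$ is the Bessel potential space; the identification of the interpolation space is standard on a bounded domain of class $C^{2,1}$.

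Next I would invoke the Sobolev embedding for Bessel potential spaces: since $N=3$ and $0<2\alpha<3/p$ — which is precisely the hypothesis $\alpha<3/(2p)$ — one has $\boldsymbol{H}^{2\alpha,p}(\Omega)\hookrightarrow\boldsymbol{L}^{q}(\Omega)$ with $1/q=1/p-2\alpha/3$. Chaining this with the embedding above yields \eqref{sbasnavierslip}. The exponent restriction is sharp for this scheme, the endpoint $2\alpha=3/p$ being exactly where the Sobolev embedding degenerates, which explains the strict inequality in the statement; note also that a direct attempt via the Dunford/subordination formula for $(I+A_{p})^{-\alpha}$ combined with a smoothing estimate would only produce a logarithmically divergent integral at this borderline exponent, so the interpolation route is the natural one.

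To deduce the quantitative bound \eqref{estasnavierslip} I would use that, as recalled just before the statement, $I+A_{p}$ has a bounded inverse, so $(I+A_{p})^{\alpha}$ is an isomorphism from $\mathbf{D}(A_{p}^{\alpha})$ onto $\boldsymbol{L}^{p}_{\sigma,\tau}(\Omega)$ and $\boldsymbol{u}\mapsto\|(I+A_{p})^{\alpha}\boldsymbol{u}\|_{\boldsymbol{L}^{p}(\Omega)}$ is an equivalent norm on $\mathbf{D}(A_{p}^{\alpha})$. Composing this norm equivalence with the continuous embedding $\mathbf{D}(A_{p}^{\alpha})\hookrightarrow\boldsymbol{L}^{q}(\Omega)$ gives \eqref{estasnavierslip}. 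In the case where $\Omega$ is not obtained by rotation around a vector $\textbf{\textit{b}}$, the operator $A_{p}$ itself is invertible with bounded inverse, so the same reasoning applies verbatim with $A_{p}$ in place of $I+A_{p}$: now $\|A_{p}^{\alpha}\boldsymbol{u}\|_{\boldsymbol{L}^{p}(\Omega)}$ is the equivalent norm, which yields \eqref{estasnavierslipinvertible}.

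The point needing extra care, and which I expect to be the main obstacle, is the range $1\le\alpha<3/(2p)$, which is nonempty precisely when $1<p<3/2$. There the interpolation identity must be run between $\boldsymbol{L}^{p}_{\sigma,\tau}(\Omega)$ and $\mathbf{D}(A_{p}^{2})$, giving $\mathbf{D}(A_{p}^{\alpha})=\big[\boldsymbol{L}^{p}_{\sigma,\tau}(\Omega),\mathbf{D}(A_{p}^{2})\big]_{\alpha/2}$ (legitimate since $\alpha/2<1$ and the imaginary powers are bounded), and one then needs the higher regularity $\mathbf{D}(A_{p}^{2})\hookrightarrow\boldsymbol{W}^{4,p}(\Omega)$. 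This last inclusion is not among the properties recalled above and must be established by bootstrapping the $\boldsymbol{W}^{2,p}$ elliptic estimate for the Stokes system with the boundary conditions \eqref{Navierbc}, applied to $A_{p}\boldsymbol{u}\in\mathbf{D}(A_{p})$ while controlling the associated pressure. Once it is available, $\big[\boldsymbol{L}^{p}(\Omega),\boldsymbol{W}^{4,p}(\Omega)\big]_{\alpha/2}=\boldsymbol{H}^{2\alpha,p}(\Omega)$ and the Sobolev embedding closes the argument exactly as before.
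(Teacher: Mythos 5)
For $0<\alpha<1$ your argument is essentially the paper's: the author likewise identifies $\mathbf{D}(A_p^{\alpha})=\mathbf{D}((I+A_p)^{\alpha})=\left[\mathbf{D}(A_p);\boldsymbol{L}^p_{\sigma,\tau}(\Omega)\right]_{\alpha}$ via \cite[Theorem 1.15.3]{Tri} and then applies the classical Sobolev embedding (citing \cite[Theorem 7.57]{Adams}), and the two estimates \eqref{estasnavierslip} and \eqref{estasnavierslipinvertible} are deduced exactly as you do, from the graph-norm characterization of $\mathbf{D}(A_p^{\alpha})$ and, in the non-rotationally-generated case, from the invertibility of $A_p$.

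The divergence --- and the genuine gap --- is in your treatment of the range $1\le\alpha<3/(2p)$, which is nonempty precisely when $1<p<3/2$. You propose to interpolate between $\boldsymbol{L}^p_{\sigma,\tau}(\Omega)$ and $\mathbf{D}(A_p^2)$ and to establish $\mathbf{D}(A_p^2)\hookrightarrow\boldsymbol{W}^{4,p}(\Omega)$ by bootstrapping the $\boldsymbol{W}^{2,p}$ elliptic estimate. This cannot work under the paper's standing hypothesis: $\Omega$ is only of class $C^{2,1}$, which is exactly the boundary regularity needed for the $\boldsymbol{W}^{2,p}$ Stokes estimate of \cite[Theorem 4.1]{REJAIBA}; a $\boldsymbol{W}^{4,p}$ estimate for the Stokes system would require a $C^{3,1}$ boundary, so the key inclusion of your scheme is simply unavailable, and the ``extra care'' you flag is not merely technical. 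The paper instead extends the embedding to $\alpha\ge1$ as in \cite[Corollary 6.11]{ARMA}, following Borchers and Miyakawa \cite{Bor2}: one writes $\alpha=\alpha_1+\alpha_2$ with each $\alpha_i<1$, applies the already-proved embedding with exponent $\alpha_2$ on the $L^p$-scale to land in an intermediate $\boldsymbol{L}^r(\Omega)$ with $1/r=1/p-2\alpha_2/3$, and then applies it again with exponent $\alpha_1$ on the $L^r$-scale, using the consistency of the fractional powers $(I+A_p)^{\alpha}$ and $(I+A_r)^{\alpha}$ on the intersection of their domains. This composition argument never uses more than $\boldsymbol{W}^{2,p}$ elliptic regularity and is the route you should take for $\alpha\ge1$.
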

\begin{proof}
Consider first the case where $0<\alpha<1$ and recall that
\begin{equation*}
\mathbf{D}(A_{p}^{\alpha})\,=\,\mathbf{D}((I\,+\,A_{p})^{\alpha})\,=\,\left[\mathbf{D}(I\,+\,A_{p});\boldsymbol{L}^{p}_{\sigma,\tau}(\Omega) \right]_{\alpha}\,=\, \left[\mathbf{D}(A_{p});\boldsymbol{L}^{p}_{\sigma,\tau}(\Omega) \right]_{\alpha}.
\end{equation*}
The embedding \eqref{sbasnavierslip} is obtained using classical Soblov embedding as in \cite[Theorem 7.57]{Adams}. To extend \eqref{sbasnavierslip} to any real $\alpha$ such that $0<\alpha<3/2p$, we proceed as in the proof of \cite[Corollary 6.11]{ARMA}. This result is similar to the result of  Borchers and Miyakawa \cite{Bor2} who proved the same result for the Stokes operator with Dirichlet boundary conditions in exterior domains for $1<p<3$.  

Estimate \eqref{estasnavierslip} is a direct consequence of \eqref{sbasnavierslip} since the domain $\mathbf{D}(A_{p}^{\alpha})$ is equipped with the graph norm of the operator $(I\,+\,A_{p})^{\alpha}$. 

In the particular case where the domain $\Omega$ is not obtained by rotation around a vector $\textbf{\textit{b}}\in \mathbb{R}^{3}$, the operator $A^{\alpha}_{p}$ is an isomorphism from $\mathbf{D}(A_{p}^{\alpha})$ to $\boldsymbol{L}^{p}_{\sigma,\tau}(\Omega)$. Thus one has estimate \eqref{estasnavierslipinvertible}.
\end{proof}
\section{Applications to the Stokes problem}\label{Applications}
In this section we shall apply the results of Section \ref{Stokes-Fractional} in order to prove maximal  $L^{p}-L^{q}$ for the inhomogeneous Stokes Problem \eqref{StokesProblem}-\eqref{u0}.

Consider first the two problems 
\begin{equation}\label{henpstokesnavierslip}
 \left\{
\begin{array}{cccc}
\frac{\partial\boldsymbol{u}}{\partial t} +A_{p} \boldsymbol{u
}=\boldsymbol{f}, & \mathrm{div}\,\boldsymbol{u}=0&\textrm{in}& 
\Omega\times (0,T), \\
\boldsymbol{u}\cdot\boldsymbol{n}=0, & \left[\mathbb{D}(\textbf{\textit{u}})\textbf{\textit{n}}\right]_{\boldsymbol{\tau}}=\boldsymbol{0}&\textrm{on}&
\Gamma\times (0,T),\\
&\boldsymbol{u}(0)= \boldsymbol{u}_{0} & \textrm{in} &
\Omega
\end{array}
\right.
\end{equation}
and
\begin{equation}\label{henpnavierslip}
 \left\{
\begin{array}{cccc}
\frac{\partial\boldsymbol{u}}{\partial t} - \Delta \boldsymbol{u
}+\nabla\pi=\boldsymbol{f}, & \mathrm{div}\,\boldsymbol{u}=0&\textrm{in}& 
\Omega\times (0,T), \\
\boldsymbol{u}\cdot\boldsymbol{n}=0, & \left[\mathbb{D}(\textbf{\textit{u}})\textbf{\textit{n}}\right]_{\boldsymbol{\tau}}=\boldsymbol{0}&\textrm{on}&
\Gamma\times (0,T),\\
&\boldsymbol{u}(0)= \boldsymbol{u}_{0} & \textrm{in} &
\Omega,
\end{array}
\right.
\end{equation}
where $\boldsymbol{u}_{0}\in\boldsymbol{L}^{p}_{\sigma,\tau}(\Omega),\,$ $\boldsymbol{f}\in L^q(0,T;\,\boldsymbol{L}^{p}_{\sigma,\tau}(\Omega)$ and $1<p,q<\infty$.
Notice that a function $\boldsymbol{u}\in
C(]0,\,+\infty[,\,\mathbf{D}(A_{p}))\cap
C^{1}(]0,\,+\infty[,\,\boldsymbol{L}^{p}_{\sigma,\tau}(\Omega))$ solves \eqref{henpstokesnavierslip} if and only if there exists a function $\pi\in C(\left] 0,\infty\right[;\,W^{1,p}(\Omega)/\mathbb{R} )$ such that $(\boldsymbol{u},\,\pi)$ solves \eqref{henpnavierslip}. Indeed, let $\boldsymbol{u}$ be a solution to \eqref{henpstokesnavierslip}. Thus $A_{p}\boldsymbol{u}=-P\Delta\boldsymbol{u}=\boldsymbol{f}-\frac{\partial\boldsymbol{u}}{\partial t}$, where $P$ is the Helmholtz projection defined by \eqref{helmholtzproj}-\eqref{wn.1}. Since $(\boldsymbol{u},\,\boldsymbol{f}-\frac{\partial\boldsymbol{u}}{\partial t})\in\mathbf{D}(A_{p})\times\boldsymbol{L}^{p}_{\sigma,\tau}(\Omega)$, then due to \cite[Theorem 4.1]{REJAIBA} there exists $\pi\in W^{1,p}(\Omega)/\mathbb{R}$ such that $A_{p}\boldsymbol{u}=-\Delta\boldsymbol{u}+\nabla\pi=-\boldsymbol{f}-\frac{\partial\boldsymbol{u}}{\partial t}$. Moreover we have the estimate
\begin{equation*}
\Vert\boldsymbol{u}\Vert_{\boldsymbol{W}^{2,p}(\Omega)/\boldsymbol{\mathcal{T}}^{p}(\Omega)}+\Vert\pi\Vert_{W^{1,p}(\Omega)/\mathbb{R}}\leq C(\Omega,p)\Big\Vert\frac{\partial\boldsymbol{u}}{\partial t}\Big\Vert_{\boldsymbol{L}^{p}(\Omega)},
\end{equation*}
where $\boldsymbol{\mathcal{T}}^{p}(\Omega)$ is the Kernel of the Stokes operator with Navier-slip boundary condition described above. This means that that the mapping $\boldsymbol{f}-\frac{\partial\boldsymbol{u}}{\partial t}\mapsto\pi$ is continuous from $\boldsymbol{L}^{p}_{\sigma,\tau}(\Omega)$ to $W^{1,p}(\Omega)$. As a result, $\pi\in C(\left] 0,\infty\right[ ;\,W^{1,p}(\Omega)/\mathbb{R})$ and $(\boldsymbol{u},\pi)$ solves \eqref{henpnavierslip}. Conversely, let $(\boldsymbol{u},\pi)$ be a solution of \eqref{henpnavierslip}. Applying the Helmholtz-projection $P$ to the first equation of Problem \eqref{henpnavierslip}, one gets directly that $\boldsymbol{u}$ solves \eqref{henpstokesnavierslip}.

\medskip

For the homogeneous problem (\textit{i.e.} $\boldsymbol{f}=\boldsymbol{0}$), the analyticity of the semi-group give us a unique solutions satisfying all the regularity desired. As stated in \cite{Rejaiba}, when the initial data $\boldsymbol{u}_{0}\in\boldsymbol{L}^{p}_{\sigma,\tau}(\Omega)$ and when $\boldsymbol{f}=\boldsymbol{0}$,
the Problem \eqref{StokesProblem}-\eqref{u0} has a unique solution $(\boldsymbol{u},\pi)$ satisfying
\begin{equation*}
\boldsymbol{u}\in
C([0,\,+\infty[,\,\boldsymbol{L}^{p}_{\sigma,\tau}(\Omega))\cap
C(]0,\,+\infty[,\,\mathbf{D}(A_{p}))\cap
C^{1}(]0,\,+\infty[,\,\boldsymbol{L}^{p}_{\sigma,\tau}(\Omega)),
\end{equation*}
\begin{equation*}
\boldsymbol{u}\in C^{k}(]0,\,+\infty[,\,\mathbf{D}(A_{p}^{\ell})),\qquad
\forall\,k\in\mathbb{N},\,\,\forall\,\ell\in\mathbb{N^{\ast}},
\end{equation*}
\begin{equation*}
\pi\in C(\left] 0,\infty\right[;\,W^{1,p}(\Omega)/\mathbb{R} ).
\end{equation*}
Moreover the following estimates hold
\begin{equation}\label{estanalysis1}
\|\boldsymbol{u}(t)\|_{\boldsymbol{L}^{p}(\Omega)}\leq\,C(\Omega,p)\,\|\boldsymbol{u}_{0}\|_{\boldsymbol{L}^{p}(\Omega)},
\end{equation}
\begin{equation*}
\Big\|\frac{\partial\boldsymbol{u}(t)}{\partial t}\Big\|_{\boldsymbol{L}^{p}(\Omega)}\leq\frac{C(\Omega,p)}{t}\,\|\boldsymbol{u}_{0}\|_{\boldsymbol{L}^{p}(\Omega)},
\end{equation*}
\begin{equation}\label{estanal2}
\Vert\mathbb{D}(\textbf{\textit{u}})\Vert_{\boldsymbol{L}^{p}(\Omega)}\,\leq\,\frac{C(\Omega,p)}{\sqrt{t}}\,\Vert\boldsymbol{u}_{0}\Vert_{\boldsymbol{L}^{p}(\Omega)}.
\end{equation}

\begin{rmk}
\rm{In the case where the domain $\Omega$ is not obtained by rotation around a vector $b\in\mathbb{R}^{3}$, the Stokes semi-group decays exponentially and we can extend estimates \eqref{estanalysis1}-\eqref{estanal2} to the following $L^p-L^q$ estimates. Exactly, for every $p$ and $q$ such that  $1<p\leq q<\infty$, for every $\boldsymbol{u}_{0}\in\boldsymbol{L}^{p}_{\sigma,\tau}(\Omega)$ and $\boldsymbol{f}=\boldsymbol{0}$, there exists a constant $\delta>0$ such that the unique solution $\boldsymbol{u}(t)$ to Problem \eqref{StokesProblem}-\eqref{u0} belongs to $\boldsymbol{L}^{q}_{\sigma,\tau}(\Omega)$ and satisfies :
 \begin{equation}\label{estlplqutnavierslip}
 \Vert\boldsymbol{u}(t)\Vert_{\boldsymbol{L}^{q}(\Omega)}\,\leq\,C\,e^{-\delta t}t^{-3/2(1/p-1/q)}\Vert \boldsymbol{u}_{0}\Vert_{\boldsymbol{L}^{p}(\Omega)},
 \end{equation}
 \begin{equation*}\label{estlplqdunavierslip}
  \Vert \mathbb{D}(\boldsymbol{u}(t))\Vert_{\boldsymbol{L}^{q}(\Omega)}\,\leq\,C\,e^{-\delta t}t^{-1/2}\,t^{-3/2(1/p-1/q)}\Vert \boldsymbol{u}_{0}\Vert_{\boldsymbol{L}^{p}(\Omega)},
 \end{equation*}
 \begin{equation}\label{estlplqlaputnavierslip}
 \forall\,m,n\in\mathbb{N},\qquad \Big\Vert\frac{\partial^{m}}{\partial t^{m}}A^{n}_{p}\boldsymbol{u}(t)\Big\Vert_{\boldsymbol{L}^{q}(\Omega)}\,\leq\,C\,e^{-\delta t}t^{-(m+n)}\,t^{-3/2(1/p-1/q)}\Vert \boldsymbol{u}_{0}\Vert_{\boldsymbol{L}^{p}(\Omega)}.
 \end{equation}}
Estimate \eqref{estlplqutnavierslip}-\eqref{estlplqlaputnavierslip} are obtained using the embedding of Sobolev type \eqref{sbasnavierslip}, estimate \eqref{estasnavierslip} and the fact that for all $\alpha\in\mathbb{R}$ we have
 \begin{equation*}
\Vert A^{\alpha}\boldsymbol{u}(t)\Vert_{\boldsymbol{L}^{p}(\Omega)}=\Vert A^{\alpha}\,e^{-t\,A_{p}}\boldsymbol{u}_{0}\Vert_{\boldsymbol{L}^{p}(\Omega)}\,\leq\, C\,e^{-\delta t}t^{-\alpha}\Vert\boldsymbol{u}_{0}\Vert_{\boldsymbol{L}^{p}(\Omega)}.
 \end{equation*}
\end{rmk}

\medskip

Consider now the non-homogeneous case, where $\boldsymbol{u}_{0}=\boldsymbol{0}$ and $\boldsymbol{f}\in L^{q}(0,T;\,\boldsymbol{L}^{p}_{\sigma,\tau}(\Omega))$, with $1<p,q<\infty$ and $0<T\leq\infty$. It is well known (cf. \cite{Pa}), that for such $\boldsymbol{f}$, Problem \eqref{StokesProblem}-\eqref{u0} has a unique solution $\boldsymbol{u}\in C(0,T;\,\boldsymbol{L}^{p}_{\sigma,\tau}(\Omega))$. It is also known that for such  $\boldsymbol{f}$ the analyticity of the Stokes semi-group is not enough to obtain a unique solution $(\boldsymbol{u},\pi)$ satisfying the following maximal $L^p-L^q$ regularity :
\begin{equation*}
\boldsymbol{u}\in L^{q}(0,T;\,\mathbf{D}(A_{p})),\qquad \frac{\partial \boldsymbol{u}}{\partial t}\in L^{q}(0,T;\,L^{p}_{\sigma,\tau}(\Omega)),
\end{equation*}
\begin{equation*}
\pi\in L^{q}(0,T;\,W^{1,p}(\Omega)/\mathbb{R}).
\end{equation*}

In what follows we prove maximal $L^p-L^q$ regularity of the solution to the Stokes Problem \eqref{StokesProblem}-\eqref{u0} using the boundedness of the pure imaginary powers of the operator $I+A_{p}$ and \cite[Theorem 2.1]{GiGa4}.
\begin{theo}[Strong solution to the Stokes Problem]\label{Exisinhnsplpnavierslip}
Let $0<T\leq\infty$, $1<p,q<\infty$, $\boldsymbol{f}\in\boldsymbol{L}^{q}(0,T;\boldsymbol{L}^{p}_{\sigma,\tau}(\Omega))$ and $\boldsymbol{u}_{0}=\boldsymbol{0}$. The Problem \eqref{StokesProblem}-\eqref{u0} has a unique solution $(\boldsymbol{u},\pi)$ such that
\begin{equation}\label{reglplq1}
\boldsymbol{u}\in L^{q}(0,T_{0};\,\boldsymbol{W}^{2,p}(\Omega)),\,\,\,\,  T_{0}\leq T\,\,\,\textrm{if}\,\,\,T<\infty\,\,\,\, \mathrm{and }\,\,\,\,T_{0}<T\,\,\,\textrm{if}\,\,\,T=\infty,
\end{equation}
\begin{equation*}
\pi\in L^{q}(0,T;\,W^{1,p}(\Omega)/\mathbb{R}),\qquad \frac{\partial\boldsymbol{u}}{\partial t}\in L^{q}(0,T;\,\boldsymbol{L}^{p}(\Omega))
\end{equation*}
and
\begin{multline}\label{estlplq1}
\int_{0}^{T}\Big\Vert\frac{\partial\boldsymbol{u}}{\partial t}\Big\Vert^{q}_{\boldsymbol{L}^{p}(\Omega)}\,\mathrm{d}\,t\,+\,\int_{0}^{T}\Vert A_{p}\boldsymbol{u}(t)\Vert^{q}_{\boldsymbol{L}^{p}(\Omega)}\,\mathrm{d}\,t\,+\,\int_{0}^{T}\Vert\pi(t)\Vert^{q}_{W^{1,p}(\Omega)/\mathbb{R}}\,\mathrm{d}\,t\\
\leq\,C(p,q,\Omega)\,\int_{0}^{T}\Vert\boldsymbol{f}(t)\Vert^{q}_{\boldsymbol{L}^{p}(\Omega)}\,\mathrm{d}\,t.
\end{multline}
\end{theo}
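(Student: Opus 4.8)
The plan is to reduce the boundary-value problem \eqref{StokesProblem}-\eqref{u0} to the abstract evolution equation \eqref{henpstokesnavierslip} and then invoke the Dore--Venni type maximal regularity machinery. Recall from the equivalence established just before the theorem that a pair $(\boldsymbol{u},\pi)$ solves \eqref{henpnavierslip} if and only if $\boldsymbol{u}$ solves the abstract Cauchy problem $\frac{\partial\boldsymbol{u}}{\partial t}+A_{p}\boldsymbol{u}=\boldsymbol{f}$, $\boldsymbol{u}(0)=\boldsymbol{0}$, in $\boldsymbol{L}^{p}_{\sigma,\tau}(\Omega)$, so it suffices to produce a solution $\boldsymbol{u}\in L^{q}(0,T;\mathbf{D}(A_{p}))$ with $\frac{\partial\boldsymbol{u}}{\partial t}\in L^{q}(0,T;\boldsymbol{L}^{p}_{\sigma,\tau}(\Omega))$ satisfying the estimate on the first two terms of \eqref{estlplq1}, and then recover the pressure. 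First I would treat the operator $I+A_{p}$ rather than $A_{p}$ itself, since by Proposition \ref{I+lap} it is invertible with bounded inverse and by Theorem \ref{pureimg1+lap} its pure imaginary powers satisfy the bound \eqref{estimpur1+lap} with power angle $\theta_{0}<\pi/2$. This is exactly the hypothesis needed to apply \cite[Theorem 2.1]{GiGa4} (the Giga--Sohr abstract maximal regularity theorem, itself based on \cite{DV}): the generator of a bounded analytic semigroup whose imaginary powers grow at most like $e^{|s|\theta}$ with $\theta<\pi/2$ enjoys maximal $L^{q}$ regularity on $\boldsymbol{L}^{p}_{\sigma,\tau}(\Omega)$.

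Concretely I would argue as follows. Set $\boldsymbol{v}=e^{t}\boldsymbol{u}$ so that $\boldsymbol{v}$ solves $\frac{\partial\boldsymbol{v}}{\partial t}+(I+A_{p})\boldsymbol{v}=e^{t}\boldsymbol{f}=:\boldsymbol{g}$ with $\boldsymbol{v}(0)=\boldsymbol{0}$; on any finite interval $(0,T_{0})$ the factor $e^{t}$ is bounded so $\boldsymbol{g}\in L^{q}(0,T_{0};\boldsymbol{L}^{p}_{\sigma,\tau}(\Omega))$. Applying \cite[Theorem 2.1]{GiGa4} to the operator $I+A_{p}$, which has bounded imaginary powers with angle $<\pi/2$ by Theorem \ref{pureimg1+lap}, yields a unique $\boldsymbol{v}\in L^{q}(0,T_{0};\mathbf{D}(A_{p}))$ with $\frac{\partial\boldsymbol{v}}{\partial t}\in L^{q}(0,T_{0};\boldsymbol{L}^{p}_{\sigma,\tau}(\Omega))$ and
\begin{equation*}
\Big\Vert\frac{\partial\boldsymbol{v}}{\partial t}\Big\Vert_{L^{q}(0,T_{0};\boldsymbol{L}^{p}(\Omega))}+\Vert(I+A_{p})\boldsymbol{v}\Vert_{L^{q}(0,T_{0};\boldsymbol{L}^{p}(\Omega))}\leq C(p,q,\Omega)\,\Vert\boldsymbol{g}\Vert_{L^{q}(0,T_{0};\boldsymbol{L}^{p}(\Omega))}.
\end{equation*}
Undoing the substitution and using that $e^{\pm t}$ is bounded on $(0,T_{0})$ gives the bound for $\frac{\partial\boldsymbol{u}}{\partial t}$ and for $A_{p}\boldsymbol{u}$ in \eqref{estlplq1}; since $\mathbf{D}(A_{p})\subset\boldsymbol{W}^{2,p}(\Omega)$ by \eqref{c2dpa}, the regularity \eqref{reglplq1} follows, with the stated distinction between $T<\infty$ and $T=\infty$ arising precisely because the $e^{t}$ trick only controls finite intervals when $A_{p}$ has a nontrivial kernel.

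It remains to recover the pressure and its estimate. Here I would use the elliptic regularity statement \cite[Theorem 4.1]{REJAIBA} invoked in the paragraph preceding the theorem: for almost every $t$, the pair $(\boldsymbol{u}(t),\boldsymbol{f}(t)-\frac{\partial\boldsymbol{u}}{\partial t}(t))\in\mathbf{D}(A_{p})\times\boldsymbol{L}^{p}_{\sigma,\tau}(\Omega)$ produces a unique $\pi(t)\in W^{1,p}(\Omega)/\mathbb{R}$ with $-\Delta\boldsymbol{u}+\nabla\pi=\boldsymbol{f}-\frac{\partial\boldsymbol{u}}{\partial t}$ and the continuity estimate $\Vert\pi(t)\Vert_{W^{1,p}(\Omega)/\mathbb{R}}\leq C(\Omega,p)\Vert\boldsymbol{f}(t)-\frac{\partial\boldsymbol{u}}{\partial t}(t)\Vert_{\boldsymbol{L}^{p}(\Omega)}$. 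Raising to the $q$-th power and integrating in $t$, then combining with the already-established bound on $\frac{\partial\boldsymbol{u}}{\partial t}$, controls the pressure term in \eqref{estlplq1} and places $\pi\in L^{q}(0,T;W^{1,p}(\Omega)/\mathbb{R})$. Uniqueness of $(\boldsymbol{u},\pi)$ follows from the uniqueness in the abstract theorem together with the uniqueness (up to an additive constant) of the pressure in \cite[Theorem 4.1]{REJAIBA}. The main obstacle I anticipate is bookkeeping the time interval: because $A_{p}$ need not be invertible (its kernel $\boldsymbol{\mathcal{T}}^{p}(\Omega)$ can be nonzero when $\Omega$ is rotationally symmetric), the clean maximal regularity is available for $I+A_{p}$ rather than $A_{p}$, and transferring it back to $A_{p}$ forces the restriction to finite $T_{0}$ in \eqref{reglplq1}; care is needed to state precisely on which interval each estimate holds and to ensure the constants remain independent of $T_{0}$.
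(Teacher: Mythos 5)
Your overall architecture (reduce to the abstract Cauchy problem, apply the Giga--Sohr/Dore--Venni theorem to a shifted operator, undo the shift, recover the pressure from \cite[Theorem 4.1]{REJAIBA}) matches the paper's, and the pressure recovery and uniqueness parts are fine. But the core step has a genuine gap that you yourself flag without resolving: your substitution $\boldsymbol{v}=e^{t}\boldsymbol{u}$ uses a \emph{growing} exponential with fixed rate $1$, so after applying maximal regularity to $I+A_{p}$ and undoing the substitution you pick up a factor $e^{T_{0}}$ (indeed $e^{qT_{0}}$ in the $q$-th power estimate) in the constant. This makes the constant depend on $T_{0}$, blow up as $T_{0}\to\infty$, and in particular it cannot yield \eqref{estlplq1}, which is an integral over all of $(0,T)$ with a constant $C(p,q,\Omega)$ independent of $T$ --- including the case $T=\infty$. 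The estimate \eqref{estlplq1} is the substance of the theorem, so this is not bookkeeping; with the shift going in that direction the argument fails.

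The paper's proof resolves exactly this point by going the other way: it sets $\boldsymbol{u}_{\mu}(t)=e^{-t/\mu^{2}}\boldsymbol{u}(t)$, which solves the problem for the operator $\frac{1}{\mu^{2}}I+A_{p}$ with forcing $e^{-t/\mu^{2}}\boldsymbol{f}$. Because the damping factor has modulus at most $1$, the right-hand side of the maximal regularity estimate is bounded by $\int_{0}^{T}\Vert\boldsymbol{f}\Vert^{q}_{\boldsymbol{L}^{p}(\Omega)}\,\mathrm{d}t$ uniformly in $\mu$ and $T$. The price is that one must apply \cite[Theorem 2.1]{GiGa4} to $\lambda I+A_{p}$ with $\lambda=1/\mu^{2}$ arbitrarily small, which is why Theorem \ref{Lapimpowerproposition} (uniform-in-$\lambda$ bound \eqref{estnuDeltaM*} on the imaginary powers of $\lambda I+A_{p}$, proved via the scaling $S_{\mu}$ and the covering argument of Section \ref{appendix}) is needed rather than just Theorem \ref{pureimg1+lap} for the single operator $I+A_{p}$ that you invoke. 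One then lets $\mu\to\infty$ in the uniform estimate for $\boldsymbol{u}_{\mu}$ and uses dominated convergence to obtain \eqref{estlplq1} for $\boldsymbol{u}$ itself. Two smaller remarks: you should also cite the $\zeta$-convexity of $\boldsymbol{L}^{p}_{\sigma,\tau}(\Omega)$, which is a hypothesis of \cite[Theorem 2.1]{GiGa4}; and the restriction to $T_{0}<T$ when $T=\infty$ in \eqref{reglplq1} comes from undoing the substitution $\boldsymbol{u}=e^{t/\mu^{2}}\boldsymbol{u}_{\mu}$ to transfer the membership in $L^{q}(\cdot;\mathbf{D}(A_{p}))$, not from the estimate itself, which survives the limit globally.
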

\begin{proof}
As stated above Problem \eqref{StokesProblem}-\eqref{u0}  has a unique solution $\boldsymbol{u}\in C(0,T;\,\boldsymbol{L}^{p}_{\sigma,\tau}(\Omega))$. Let us prove that this solution satisfy the maximal $L^{p}-L^{q}$ regularity \eqref{reglplq1}. Indeed, let $\mu>0$ and set $\boldsymbol{u}_{\mu}(t)=e^{-\frac{1}{\mu^{2}}t}\boldsymbol{u}(t)$, the function $\boldsymbol{u}_{\mu}(t)$ is a solution to the following problem
\begin{equation}\label{inhenspv(t)}
 \left\{
\begin{array}{cccc}
\frac{\partial\boldsymbol{u}_{\mu}}{\partial t} + (\frac{1}{\mu^{2}}I\,+\,A_{p})\boldsymbol{u}_{\mu}(t)=e^{-\frac{1}{\mu^{2}}t}\boldsymbol{f},& 
\mathrm{div}\,\boldsymbol{u}_{\mu}(t)= 0 &\mathrm{in}&\Omega\times (0,T), \\
\boldsymbol{u}_{\mu}(t)\cdot\boldsymbol{n}=0,& 
[\mathbb{D}(\boldsymbol{u}_{\mu}(t))\boldsymbol{n}]_{\tau} = \boldsymbol{0} &\mathrm{on} & \Gamma\times (0,T), \\
&\boldsymbol{u}_{\mu}(0)=\boldsymbol{u}(0)=\boldsymbol{0} &\mathrm{in}&
\Omega.
\end{array}
\right.
\end{equation}
Since the pure imaginary powers of the operator $(\frac{1}{\mu^{2}}I\,+\,A_{p})$ are bounded in $\boldsymbol{L}^{p}_{\sigma,\tau}(\Omega)$  (see Theorem \ref{Lapimpowerproposition}) and since for all $1<p<\infty$, $\boldsymbol{L}^{p}_{\sigma,\tau}(\Omega)$ is $\zeta$-convex, we can apply the result of \cite[Theorem 2.1]{GiGa4} to the operator $(\frac{1}{\mu^{2}}I\,+\,A_{p})$. Thus, the solution $\boldsymbol{u}_{\mu}(t)$ to the Problem \eqref{inhenspv(t)} satisfies the following maximal $L^{p}-L^{q}$ regularity
\begin{equation}\label{reglplqlap1vt}
\boldsymbol{u}_{\mu}\in L^{q}(0,T_{0};\,\mathbf{D}(A_{p}))\cap W^{1,q}(0,T;\,\boldsymbol{L}^{p}_{\sigma,\tau}(\Omega)),
\end{equation}
with $T_{0}\leq T\,$ if $T<\infty\,$ and $T_{0}<T\,$ if $T=\infty$. Furthermore $\boldsymbol{u}_{\mu}(t)$ satisfies the following estimate
\begin{multline}\label{estlplqlapvt}
\int_{0}^{T}\Big\Vert\frac{\partial\boldsymbol{u}_{\mu}}{\partial t}\Big\Vert^{q}_{\boldsymbol{L}^{p}(\Omega)}\,\mathrm{d}\,t\,+\,\int_{0}^{T}\Big\Vert\Big(\frac{1}{\mu^{2}}I\,+\,A_{p}\Big)\boldsymbol{u}_{\mu}(t)\Big\Vert^{q}_{\boldsymbol{L}^{p}(\Omega)}\,\mathrm{d}\,t\,\leq\\
C(p,q,\Omega)\,\int_{0}^{T}\Vert e^{-\frac{1}{\mu^{2}}t}\boldsymbol{f}(t)\Vert^{q}_{\boldsymbol{L}^{p}(\Omega)}\,\mathrm{d}\,t\leq C(p,q,\Omega)\,\int_{0}^{T}\Vert\boldsymbol{f}(t)\Vert^{q}_{\boldsymbol{L}^{p}(\Omega)}\,\mathrm{d}\,t,
\end{multline}
where the constant $C(p,q,\Omega)$ is independent of $\mu$.

Since the solution $\boldsymbol{u}$ to Problem \eqref{StokesProblem}-\eqref{u0} can be written in the form $\boldsymbol{u}(t)=e^{\frac{1}{\mu^{2}}}\boldsymbol{u}_{\mu}(t),\,$ we deduce from \eqref{reglplqlap1vt} that $\boldsymbol{u}$ satisfies
$$\boldsymbol{u}\in L^{q}(0,T_{0};\,\mathbf{D}(A_{p}))\cap W^{1,q}(0,T;\,\boldsymbol{L}^{p}_{\sigma,\tau}(\Omega)),$$
with $T_{0}\leq T\,$ if $T<\infty\,$ and $T_{0}<T\,$ if $T=\infty$.  

Using now the fact that $A_{p}\boldsymbol{u}=-\Delta\boldsymbol{u}+\nabla\pi=\boldsymbol{f}-\frac{\partial\boldsymbol{u}}{\partial t}$, one has thanks to \cite[Theorem 4.1]{REJAIBA}
\begin{equation}\label{pistrong}
\Vert\boldsymbol{u}(t)\Vert_{\boldsymbol{W}^{2,p}(\Omega)/\boldsymbol{\mathcal{T}}^{p}(\Omega)}\,+\,\Vert\pi\Vert_{W^{1,p}(\Omega)/\mathbb{R}}\,\leq\,C\,\Big(\Big\Vert\frac{\partial\boldsymbol{u}}{\partial t}\Big\Vert_{\boldsymbol{L}^{p}(\Omega)}+\Vert\boldsymbol{f}\Vert_{\boldsymbol{L}^{p}(\Omega)}\Big).
\end{equation}
As a result we deduce that $\pi\in L^{q}(0,T;\, W^{1,p}(\Omega)/\mathbb{R})$.

 It remains to prove estimate \eqref{estlplq1}. We recall first the following equivalence of norms
 $$\forall\,\boldsymbol{v}\in\mathbf{D}(A_{p}),\quad\Vert \boldsymbol{v}\Vert_{\boldsymbol{W}^{2,p}(\Omega)}\,\simeq\, \Vert \boldsymbol{v}\Vert_{\mathbf{D}(A_{p})} \,\simeq\, \Big\Vert\Big(\frac{1}{\mu^{2}}I\,+\,A_{p}\Big)\boldsymbol{v}\Big\Vert_{\boldsymbol{L}^{p}(\Omega)}. $$
Then, substituting in \eqref{estlplqlapvt} we have
 \begin{multline}\label{estlplqlapvt2}
\int_{0}^{T}\Big\Vert\frac{\partial\boldsymbol{u}_{\mu}}{\partial t}\Big\Vert^{q}_{\boldsymbol{L}^{p}(\Omega)}\,\mathrm{d}\,t\,+\,\int_{0}^{T}\Vert\boldsymbol{u}_{\mu}(t)\Vert^{q}_{\mathbf{D}(A_{p})}\,\mathrm{d}\,t\,\leq
C(p,q,\Omega)\,\int_{0}^{T}\Vert\boldsymbol{f}(t)\Vert^{q}_{\boldsymbol{L}^{p}(\Omega)}\,\mathrm{d}\,t,
\end{multline}
where the constant $C(p,q,\Omega)$ is independent of $\mu$.
Next observe that
\begin{equation*}
\boldsymbol{u}_{\mu}\longrightarrow\boldsymbol{u}\quad\mathrm{in}\,\,\, L^{q}(0,T_{0};\,\mathbf{D}(A_{p}))\cap W^{1,q}(0,T_{0};\,\boldsymbol{L}^{p}_{\sigma,\tau}(\Omega)).
\end{equation*}
Using then the dominated convergence theorem and passing to the limit as $\mu$ tends to infinity in \eqref{estlplqlapvt2} we obtain
\begin{equation*}
\int_{0}^{T}\Big\Vert\frac{\partial\boldsymbol{u}}{\partial t}\Big\Vert^{q}_{\boldsymbol{L}^{p}(\Omega)}\,\mathrm{d}\,t\,+\,\int_{0}^{T}\Vert\boldsymbol{u}(t)\Vert^{q}_{\mathbf{D}(A_{p})}\,\mathrm{d}\,t\,\leq\,
C(p,q,\Omega)\,\int_{0}^{T}\Vert\boldsymbol{f}(t)\Vert^{q}_{\boldsymbol{L}^{p}(\Omega)}\,\mathrm{d}\,t.
\end{equation*}
Finally using \eqref{pistrong} and the fact that $\Vert \boldsymbol{u}\Vert_{\mathbf{D}(A_{p})}$ is equivalent to $\Vert \boldsymbol{u}\Vert_{\boldsymbol{L}^{p}(\Omega)}\,+\,\Vert A_{p}\boldsymbol{u}\Vert_{\boldsymbol{L}^{p}(\Omega)}$,  estimate \eqref{estlplq1} follows directly.
\end{proof}

The boundedness of the pure imaginary powers of the operators $\lambda I+B_{p}$ and $\lambda I+C_{p},$ with $\lambda>0$ on the spaces $[\boldsymbol{H}^{p'}_{0}(\mathrm{div},\Omega)]'_{\sigma,\tau}$ and $[\boldsymbol{T}^{p'}(\Omega)]'_{\sigma,\tau}$ respectively, (see Theorem \ref{PureimgI+Bp+Cp}), allows us to obtain weak and very weak solutions to Problem \eqref{StokesProblem}-\eqref{u0}. Indeed, using \cite[Proposition 2.16]{ARMA} we know that the spaces $[\boldsymbol{H}^{p'}_{0}(\mathrm{div},\Omega)]'_{\sigma,\tau}$ and $[\boldsymbol{T}^{p'}(\Omega)]'_{\sigma,\tau}$ are $\zeta$-convex Banach spaces. As a result, proceeding as in the proof of Theorem \ref{Exisinhnsplpnavierslip} we obtain the following two theorems.

\begin{theo}[Weak solution to the Stokes Problem]\label{Existinhsphdivnavierslip}
Let $1<p,q<\infty$, $\boldsymbol{u}_{0}=0$ and let $\boldsymbol{f}\in L^{q}(0,T;\,[\boldsymbol{H}^{p'}_{0}(\mathrm{div},\Omega)]'_{\sigma,\tau})$, $0<T\leq\infty$. The Problem  \eqref{StokesProblem}-\eqref{u0} has a unique solution $(\boldsymbol{u},\pi)$ satisfying
\begin{equation*}
\boldsymbol{u}\in L^{q}(0,T_{0};\,\,\boldsymbol{W}^{1,p}(\Omega)),\,\,\,\,  T_{0}\leq T\,\,\,\textrm{if}\,\,\,T<\infty\,\,\,\, \mathrm{and }\,\,\,\,T_{0}<T\,\,\,\textrm{if}\,\,\,T=\infty,
\end{equation*}
\begin{equation*}
\pi\in L^{q}(0,T;\,\,L^{p}(\Omega)/\mathbb{R}),\qquad\frac{\partial\boldsymbol{u}}{\partial t}\in L^{q}(0,T;\,\in[\boldsymbol{H}^{p'}_{0}(\mathrm{div}\Omega)]'_{\sigma,T})
\end{equation*}
and
\begin{multline}\label{estlplaweak}
\int_{0}^{T}\Big\Vert\frac{\partial\boldsymbol{u}}{\partial t}\Big\Vert^{q}_{[\boldsymbol{H}^{p'}_{0}(\mathrm{div}\Omega)]'}\,\mathrm{d}\,t\,+\,\int_{0}^{T}\Vert B_{p}\boldsymbol{u}(t)\Vert^{q}_{[\boldsymbol{H}^{p'}_{0}(\mathrm{div}\Omega)]'}\,\mathrm{d}\,t\,+\,\int_{0}^{T}\Vert\pi(t)\Vert^{q}_{L^{p}(\Omega)/\mathbb{R}}\,\mathrm{d}\,t\\
\leq\,C(p,q,\Omega)\,\int_{0}^{T}\Vert\boldsymbol{f}(t)\Vert^{q}_{[\boldsymbol{H}^{p'}_{0}(\mathrm{div}\Omega)]'}\,\mathrm{d}\,t.
\end{multline}
 \end{theo}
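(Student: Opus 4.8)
The plan is to reproduce, essentially verbatim, the argument of Theorem~\ref{Exisinhnsplpnavierslip}, with the operator $A_{p}$ on $\boldsymbol{L}^{p}_{\sigma,\tau}(\Omega)$ replaced by its extension $B_{p}$ on the dual space $[\boldsymbol{H}^{p'}_{0}(\mathrm{div},\Omega)]'_{\sigma,\tau}$. First I would recall that, writing Problem~\eqref{StokesProblem}--\eqref{u0} as the abstract Cauchy problem $\partial_{t}\boldsymbol{u}+B_{p}\boldsymbol{u}=\boldsymbol{f}$, $\boldsymbol{u}(0)=\boldsymbol{0}$, the analyticity of the semi-group generated by $-B_{p}$ (cf.\ \cite[Theorem 3.10]{Rejaiba}) together with \cite{Pa} guarantees a unique solution $\boldsymbol{u}\in C(0,T;[\boldsymbol{H}^{p'}_{0}(\mathrm{div},\Omega)]'_{\sigma,\tau})$; this also settles uniqueness.

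For the maximal regularity I would introduce, for $\mu>0$, the function $\boldsymbol{u}_{\mu}(t)=e^{-\frac{1}{\mu^{2}}t}\boldsymbol{u}(t)$, which solves the shifted problem with generator $\frac{1}{\mu^{2}}I+B_{p}$ and source $e^{-\frac{1}{\mu^{2}}t}\boldsymbol{f}$. By Theorem~\ref{PureimgI+Bp+Cp} the pure imaginary powers of $\frac{1}{\mu^{2}}I+B_{p}$ satisfy $\Vert(\frac{1}{\mu^{2}}I+B_{p})^{is}\Vert\leq C\,e^{|s|\theta_{0}}$ with $\theta_{0}<\pi/2$ and $C$ independent of $\mu$, and by \cite[Proposition 2.16]{ARMA} the space $[\boldsymbol{H}^{p'}_{0}(\mathrm{div},\Omega)]'_{\sigma,\tau}$ is $\zeta$-convex. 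Hence \cite[Theorem 2.1]{GiGa4} applies and yields
\begin{equation*}
\boldsymbol{u}_{\mu}\in L^{q}(0,T_{0};\mathbf{D}(B_{p}))\cap W^{1,q}(0,T;[\boldsymbol{H}^{p'}_{0}(\mathrm{div},\Omega)]'_{\sigma,\tau})
\end{equation*}
with an estimate for $\partial_{t}\boldsymbol{u}_{\mu}$ and $(\frac{1}{\mu^{2}}I+B_{p})\boldsymbol{u}_{\mu}$ in the dual norm bounded by $\Vert\boldsymbol{f}\Vert_{L^{q}(0,T;[\boldsymbol{H}^{p'}_{0}(\mathrm{div},\Omega)]')}$, uniformly in $\mu$. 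Writing $\boldsymbol{u}(t)=e^{\frac{1}{\mu^{2}}t}\boldsymbol{u}_{\mu}(t)$ transfers this regularity to $\boldsymbol{u}$, and the inclusion $\mathbf{D}(B_{p})\hookrightarrow\boldsymbol{W}^{1,p}(\Omega)$ coming from \eqref{DpB} gives $\boldsymbol{u}\in L^{q}(0,T_{0};\boldsymbol{W}^{1,p}(\Omega))$.

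To recover the pressure I would use $B_{p}\boldsymbol{u}=-\Delta\boldsymbol{u}+\nabla\pi=\boldsymbol{f}-\partial_{t}\boldsymbol{u}$ together with the weak (dual-space) stationary estimate for the Navier--slip Stokes system, the analogue of \cite[Theorem 4.1]{REJAIBA} used in Theorem~\ref{Exisinhnsplpnavierslip} but now with right-hand side in $[\boldsymbol{H}^{p'}_{0}(\mathrm{div},\Omega)]'$ and pressure in $L^{p}(\Omega)/\mathbb{R}$; this produces $\pi\in L^{q}(0,T;L^{p}(\Omega)/\mathbb{R})$ with the corresponding control. Finally, exploiting the equivalence $\Vert\boldsymbol{v}\Vert_{\mathbf{D}(B_{p})}\simeq\Vert(\frac{1}{\mu^{2}}I+B_{p})\boldsymbol{v}\Vert_{[\boldsymbol{H}^{p'}_{0}(\mathrm{div},\Omega)]'}$ (uniform in $\mu$) and passing to the limit $\mu\to\infty$ via dominated convergence gives \eqref{estlplaweak}. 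The step I expect to be the main obstacle is precisely the pressure recovery: it requires having at hand the correct weak version of the stationary de Rham / elliptic estimate in the dual space $[\boldsymbol{H}^{p'}_{0}(\mathrm{div},\Omega)]'$, whereas all remaining steps are a direct transcription of the proof of Theorem~\ref{Exisinhnsplpnavierslip}.
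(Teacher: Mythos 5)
Your proposal is correct and follows essentially the same route as the paper: the change of variable $\boldsymbol{u}_{\mu}(t)=e^{-t/\mu^{2}}\boldsymbol{u}(t)$, the uniform bound on the imaginary powers of $\frac{1}{\mu^{2}}I+B_{p}$ from Theorem~\ref{PureimgI+Bp+Cp} combined with the $\zeta$-convexity of $[\boldsymbol{H}^{p'}_{0}(\mathrm{div},\Omega)]'_{\sigma,\tau}$ and \cite[Theorem 2.1]{GiGa4}, and then pressure recovery via the stationary weak estimate. The only detail worth noting is that the ``weak version'' of the elliptic estimate you flag as the potential obstacle is exactly what the paper invokes, namely \cite[Theorems 3.7, 3.9]{REJAIBA}, which gives $\Vert\boldsymbol{u}\Vert_{\boldsymbol{W}^{1,p}(\Omega)/\boldsymbol{\mathcal{T}}^{p}(\Omega)}+\Vert\pi\Vert_{L^{p}(\Omega)/\mathbb{R}}$ controlled by the $[\boldsymbol{H}^{p'}_{0}(\mathrm{div},\Omega)]'$-norms of $\partial_{t}\boldsymbol{u}$ and $\boldsymbol{f}$.
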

 \begin{proof}
 Proceeding in the same way as in the proof of Theorem \ref{Exisinhnsplpnavierslip}, using the boundedness of the pure imaginary powers of the operators $\frac{1}{\mu^2}\, I+B_{p},\,$ $\mu>0$, on $[\boldsymbol{H}^{p'}_{0}(\mathrm{div},\Omega)]'_{\sigma,\tau}$ and the change of variable $\boldsymbol{u}_{\mu}(t)=e^{-\frac{1}{\mu^{2}}t}\boldsymbol{u}(t)$, we obtain that Problem \eqref{StokesProblem}-\eqref{u0} has a unique solution satisfying $$\boldsymbol{u}\in L^{q}(0,T_{0};\,\,\boldsymbol{W}^{1,p}(\Omega))\cap W^{1,q}(0,T;\,[\boldsymbol{H}^{p'}_{0}(\mathrm{div}\Omega)]'_{\sigma,T}),$$
 with $T_{0}\leq T\,$ if $T<\infty\,$ and $T_{0}<T\,$ if $T=\infty$.
 Next, using that, $B_{p}\boldsymbol{u}=-\Delta\boldsymbol{u}+\nabla\pi=\boldsymbol{f}-\frac{\partial\boldsymbol{u}}{\partial t}$, one has thanks to \cite[Theorems 3.7, 3.9]{REJAIBA}
\begin{equation}\label{piweak}
\Vert\boldsymbol{u}(t)\Vert_{\boldsymbol{W}^{1,p}(\Omega)/\boldsymbol{\mathcal{T}}^{p}(\Omega)}\,+\,\Vert\pi\Vert_{L^{p}(\Omega)/\mathbb{R}}\,\leq\,\Big\Vert\frac{\partial\boldsymbol{u}}{\partial t}\Big\Vert_{[\boldsymbol{H}^{p'}_{0}(\mathrm{div}\Omega)]'}+\Vert\boldsymbol{f}\Vert_{[\boldsymbol{H}^{p'}_{0}(\mathrm{div}\Omega)]'}.
\end{equation}
As a result we deduce that $\pi\in L^{q}(0,T;\, L^{p}(\Omega)/\mathbb{R})$ and estimate \eqref{estlplaweak} follows.
 \end{proof}

\medskip

\begin{theo}[Very weak solutions to the Stokes Problem]\label{Existinhsptpnavierslip} 
Let $0<T\leq\infty$, $1<p,q<\infty$,  $\boldsymbol{u}_{0}=0$ and $\boldsymbol{f}\in L^{q}(0,T;\,[\boldsymbol{T}^{p'}(\Omega)]'_{\sigma,\tau})$. Then the evolutionary Stokes Problem \eqref{StokesProblem}-\eqref{u0} has a unique solution $(\boldsymbol{u},\pi)$ satisfying
\begin{equation*}
\boldsymbol{u}\in L^{q}(0,T_{0};\,\boldsymbol{L}^{p}(\Omega)),\,\,\,\,  T_{0}\leq T\,\,\,\textrm{if}\,\,\,T<\infty\,\,\,\, \mathrm{and }\,\,\,\,T_{0}<T\,\,\,\textrm{if}\,\,\,T=\infty,
\end{equation*}  
\begin{equation*}
\pi\in L^{q}(0,T;\,\,W^{-1,p}(\Omega)/\mathbb{R}),\qquad\frac{\partial\boldsymbol{u}}{\partial t}\in L^{q}(0,T;\,\in[\boldsymbol{T}^{p'}(\Omega)]'_{\sigma,\tau})
\end{equation*}
and
\begin{multline*}
\int_{0}^{T}\Big\Vert\frac{\partial\boldsymbol{u}}{\partial t}\Big\Vert^{q}_{[\boldsymbol{T}^{p'}(\Omega)]'}\,\mathrm{d}\,t\,+\,\int_{0}^{T}\Vert C_{p}\boldsymbol{u}(t)\Vert^{q}_{[\boldsymbol{T}^{p'}(\Omega)]'}\,\mathrm{d}\,t\,+\,\int_{0}^{T}\Vert\pi(t)\Vert^{q}_{W^{-1,p}(\Omega)/\mathbb{R}}\,\mathrm{d}\,t\\
\leq\,C(p,q,\Omega)\,\int_{0}^{T}\Vert\boldsymbol{f}(t)\Vert^{q}_{[\boldsymbol{T}^{p'}(\Omega)]'}\,\mathrm{d}\,t.
\end{multline*}
 \end{theo}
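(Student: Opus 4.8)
The plan is to mirror, essentially verbatim, the proof of Theorem \ref{Exisinhnsplpnavierslip}, replacing the Stokes operator $A_{p}$ on $\boldsymbol{L}^{p}_{\sigma,\tau}(\Omega)$ by its very weak realization $C_{p}$ on $[\boldsymbol{T}^{p'}(\Omega)]'_{\sigma,\tau}$. Since $-C_{p}$ generates a bounded analytic semi-group on $[\boldsymbol{T}^{p'}(\Omega)]'_{\sigma,\tau}$, the existence and uniqueness of a solution $\boldsymbol{u}\in C(0,T;\,[\boldsymbol{T}^{p'}(\Omega)]'_{\sigma,\tau})$ to Problem \eqref{StokesProblem}-\eqref{u0} follows from the classical theory (cf. \cite{Pa}); the whole point is to upgrade this solution to the maximal $L^{p}-L^{q}$ regularity. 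First I would introduce the shifted unknown $\boldsymbol{u}_{\mu}(t)=e^{-\frac{1}{\mu^{2}}t}\boldsymbol{u}(t)$ for $\mu>0$, which solves the analogue of Problem \eqref{inhenspv(t)} governed by the operator $\frac{1}{\mu^{2}}I+C_{p}$ with right-hand side $e^{-\frac{1}{\mu^{2}}t}\boldsymbol{f}$.

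The two hypotheses required to invoke the abstract maximal regularity result \cite[Theorem 2.1]{GiGa4} are already in hand: the space $[\boldsymbol{T}^{p'}(\Omega)]'_{\sigma,\tau}$ is $\zeta$-convex by \cite[Proposition 2.16]{ARMA}, and the pure imaginary powers of $\frac{1}{\mu^{2}}I+C_{p}$ satisfy the $\mu$-uniform bound \eqref{pureimptp} of Theorem \ref{PureimgI+Bp+Cp}. Applying \cite[Theorem 2.1]{GiGa4} then yields $\boldsymbol{u}_{\mu}\in L^{q}(0,T_{0};\,\mathbf{D}(C_{p}))\cap W^{1,q}(0,T;\,[\boldsymbol{T}^{p'}(\Omega)]'_{\sigma,\tau})$, together with an estimate of the type \eqref{estlplqlapvt} whose constant is independent of $\mu$. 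Undoing the exponential change of variable transfers this regularity to $\boldsymbol{u}$; since the domain $\mathbf{D}(C_{p})$ in \eqref{DpC} is a subspace of $\boldsymbol{L}^{p}(\Omega)$, this produces precisely the stated regularity $\boldsymbol{u}\in L^{q}(0,T_{0};\,\boldsymbol{L}^{p}(\Omega))$ and $\frac{\partial\boldsymbol{u}}{\partial t}\in L^{q}(0,T;\,[\boldsymbol{T}^{p'}(\Omega)]'_{\sigma,\tau})$. To recover the pressure I would use the identity $C_{p}\boldsymbol{u}=-\Delta\boldsymbol{u}+\boldsymbol{\mathrm{grad}}\,\pi=\boldsymbol{f}-\frac{\partial\boldsymbol{u}}{\partial t}$ together with the very weak stationary estimate for the steady Stokes problem with the boundary conditions \eqref{Navierbc}, namely the very weak analogue of \cite[Theorem 4.1]{REJAIBA} and \cite[Theorems 3.7, 3.9]{REJAIBA}, which controls $\Vert\boldsymbol{u}\Vert_{\boldsymbol{L}^{p}(\Omega)/\boldsymbol{\mathcal{T}}^{p}(\Omega)}+\Vert\pi\Vert_{W^{-1,p}(\Omega)/\mathbb{R}}$ by $\Vert\frac{\partial\boldsymbol{u}}{\partial t}\Vert_{[\boldsymbol{T}^{p'}(\Omega)]'}+\Vert\boldsymbol{f}\Vert_{[\boldsymbol{T}^{p'}(\Omega)]'}$, giving $\pi\in L^{q}(0,T;\,W^{-1,p}(\Omega)/\mathbb{R})$. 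Finally, using the equivalence of $\Vert\boldsymbol{v}\Vert_{\mathbf{D}(C_{p})}$ with $\Vert(\frac{1}{\mu^{2}}I+C_{p})\boldsymbol{v}\Vert_{[\boldsymbol{T}^{p'}(\Omega)]'}$, I would pass to the limit $\mu\to\infty$ in the $\mu$-uniform estimate by the dominated convergence theorem, exactly as in the last step of Theorem \ref{Exisinhnsplpnavierslip}, to reach the announced estimate.

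The hard part will be the pressure recovery at this low regularity level: one must confirm that the duality-based very weak theory for the steady Stokes problem with boundary conditions \eqref{Navierbc} indeed delivers the stationary bound with $\boldsymbol{u}\in\boldsymbol{L}^{p}(\Omega)$ and $\pi\in W^{-1,p}(\Omega)/\mathbb{R}$, since here the quantity $-\Delta\boldsymbol{u}$ only lives in the dual space $[\boldsymbol{T}^{p'}(\Omega)]'$ and the trace $\left[\mathbb{D}(\textbf{\textit{u}})\textbf{\textit{n}}\right]_{\boldsymbol{\tau}}$ must be interpreted in the weak sense provided by \cite[Lemma 5.4]{REJAIBA}. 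Once this stationary estimate is secured, every remaining step is a direct transcription of the proof of Theorem \ref{Exisinhnsplpnavierslip} with $A_{p}$ replaced by $C_{p}$ and the ambient space $\boldsymbol{L}^{p}_{\sigma,\tau}(\Omega)$ replaced by $[\boldsymbol{T}^{p'}(\Omega)]'_{\sigma,\tau}$.
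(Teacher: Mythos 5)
Your proposal follows essentially the same route as the paper's proof: the exponential shift $\boldsymbol{u}_{\mu}(t)=e^{-\frac{1}{\mu^{2}}t}\boldsymbol{u}(t)$, the $\mu$-uniform bound on the imaginary powers of $\frac{1}{\mu^{2}}I+C_{p}$ from Theorem \ref{PureimgI+Bp+Cp} combined with the $\zeta$-convexity of $[\boldsymbol{T}^{p'}(\Omega)]'_{\sigma,\tau}$ to invoke \cite[Theorem 2.1]{GiGa4}, and pressure recovery via the stationary very weak Stokes estimate. The stationary bound you flag as the ``hard part'' is exactly what the paper takes from \cite[Theorem 5.5]{REJAIBA}, so your argument is complete as written.
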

  \begin{proof}
  The proof is similar to the proof of Theorem \ref{Exisinhnsplpnavierslip}. First we use the boundedness of the pure imaginary powers of $\frac{1}{\mu^2}\, I+C_{p},$ with $\mu>0$ on the space $[\boldsymbol{T}^{p'}(\Omega)]'_{\sigma,\tau}$ and the change of variable $\boldsymbol{u}_{\mu}(t)=e^{-\frac{1}{\mu^{2}}t}\boldsymbol{u}(t)$.  We obtain then a unique solution to Problem \eqref{StokesProblem}-\eqref{u0} satisfying $$\boldsymbol{u}\in L^{q}(0,T_{0};\,\,\boldsymbol{L}^{p}(\Omega))\cap W^{1,q}(0,T;\,[\boldsymbol{T}^{p'}(\Omega)]'_{\sigma,\tau}),$$
 with $T_{0}\leq T\,$ if $T<\infty\,$ and $T_{0}<T\,$ if $T=\infty$.
 Next, using the fact $C_{p}\boldsymbol{u}=-\Delta\boldsymbol{u}+\nabla\pi=\boldsymbol{f}-\frac{\partial\boldsymbol{u}}{\partial t}$, one has thanks to \cite[Theorem 5.5]{REJAIBA}
\begin{equation}\label{piweak}
\Vert\boldsymbol{u}(t)\Vert_{\boldsymbol{L}^{p}(\Omega)/\boldsymbol{\mathcal{T}}^{p}(\Omega)}\,+\,\Vert\pi\Vert_{W^{-1,p}(\Omega)/\mathbb{R}}\,\leq\,\Big\Vert\frac{\partial\boldsymbol{u}}{\partial t}\Big\Vert_{[\boldsymbol{T}^{p'}(\Omega)]'}+\Vert\boldsymbol{f}\Vert_{[\boldsymbol{T}^{p'}(\Omega)]'}.
\end{equation}
As a result we deduce that $\pi\in L^{q}(0,T;\, W^{-1,p}(\Omega)/\mathbb{R})$ and estimate \eqref{estlplaweak} follows.
 \end{proof}

\section{Appendix}\label{appendix}
In this section we prove estimate \eqref{estnuDeltaM*} in the case where the domain $\Omega$ is not necessarily star shaped with respect to one of its points.

First let $\boldsymbol{u}\in\boldsymbol{L}^{p}(\Omega)$ be fixed and consider the mapping
\begin{eqnarray*}
\widetilde{A}_{p}\boldsymbol{u}&:&\quad \boldsymbol{W}\longrightarrow \mathbb{C}\\
& &\quad \boldsymbol{v} \longrightarrow -\int_{\Omega}\boldsymbol{u}\cdot\Delta\overline{\boldsymbol{v}}\,\mathrm{d}\,x,
\end{eqnarray*}
where
\begin{equation*}
\boldsymbol{W}=\boldsymbol{W}^{1,p'}_{\sigma,\tau}(\Omega)\cap\boldsymbol{W}^{2,p'}(\Omega).
\end{equation*}
We recall that for all $1<p<\infty$, the space $\boldsymbol{W}^{1,p}_{\sigma,\tau}(\Omega)$ is defined by \eqref{w1psigmatau}.
It is clear that $\widetilde{A}_{p}\in\mathcal{L}(\boldsymbol{L}^{p}(\Omega),\,\boldsymbol{W}')$ and thanks to de Rham's Lemma there exists $\pi\in W^{-1,p}(\Omega)$ (the dual of $W^{1,p'}_{0}(\Omega)$) such that $$\widetilde{A}_{p}\boldsymbol{u}+\Delta\boldsymbol{u}=\nabla\pi\qquad\mathrm{in}\quad\Omega.$$

Now suppose that $\widetilde{A}_{p}\boldsymbol{u}\in\boldsymbol{L}^{p}(\Omega)$. Since $\Delta\boldsymbol{u}=-\widetilde{A}_{p}\boldsymbol{u}+\nabla\pi\,$ in $\Omega$, then using \cite[Lemma 5.4]{REJAIBA}, we deduce that $\,[\mathbb{D}(\boldsymbol{u})\boldsymbol{n}]_{\tau}\in\boldsymbol{W}^{-1-1/p,p}(\Gamma).$ Furthermore, if we suppose that $\mathrm{div}\,\boldsymbol{u}=0\,$ in $\Omega,\,$ $\boldsymbol{u}\cdot\boldsymbol{n}=0\,$ on $\Gamma\,$ and $[\mathbb{D}(\boldsymbol{u})\boldsymbol{n}]_{\tau}=\boldsymbol{0}\,$ on $\,\Gamma$, then $(\boldsymbol{u},\,\pi)\in\boldsymbol{L}^{p}_{\sigma,\tau}(\Omega)\times W^{-1,p}(\Omega)$ is a solution to the following problem
 \begin{equation*}
\left\{
\begin{array}{r@{~}c@{~}l}
 - \Delta \boldsymbol{u}\,+\,\nabla\pi =\widetilde{A}_{p}\boldsymbol{u}, &&\quad\mathrm{div}\,\boldsymbol{u} = 0 \,\,\, \qquad\qquad \mathrm{in} \,\,\, \Omega, \\
\boldsymbol{u}\cdot \boldsymbol{n} = 0, &&\quad [\mathbb{D}(\boldsymbol{u})\boldsymbol{n}]_{\tau}=\boldsymbol{0}\qquad
\,\,\mathrm{on}\,\,\, \Gamma.
\end{array}
\right.
\end{equation*}
 Using then the regularity of the Stokes Problem \cite[Theorem 4.1]{REJAIBA} one has $(\boldsymbol{u},\,\pi)\in\boldsymbol{W}^{2,p}(\Omega)\times W^{1,p}(\Omega)$ since $\Omega$ is of class $C^{2,1}.$ 

\vspace{0.25cm}
Summarizing, the operator $\widetilde{A}_{p}:\mathbf{D}(\widetilde{A}_{p})\subset\boldsymbol{L}^{p}(\Omega)\longmapsto\boldsymbol{L}^{p}(\Omega)$ is a closed linear operator with
\begin{equation*}
\mathbf{D}(\widetilde{A}_{p})=\big\{ \boldsymbol{u} \in
\boldsymbol{W}^{2,p}(\Omega);\,\,\mathrm{div}\,\boldsymbol{u}=
0\,\, \mathrm{in}\,\,\Omega,
\,\,\boldsymbol{u}\cdot\boldsymbol{n}=0,\,\,\,[\mathbb{D}(\boldsymbol{u})\boldsymbol{n}]_{\tau}=\boldsymbol{0}
\,\,\mathrm{on}\,\,\Gamma \big\},
\end{equation*} 
\begin{equation}\label{Aptilde1}
\forall\,\,\boldsymbol{u}\in\mathbf{D}(\widetilde{A}_{p}),\qquad
\widetilde{A}_{p}\boldsymbol{u}\,=\,-\Delta\boldsymbol{u}\,+\boldsymbol{\mathrm{grad}}\,\pi.
\end{equation}
Observe that for all $\boldsymbol{u}\in\mathbf{D}(\widetilde{A}_{p})$ and for all $\boldsymbol{v}\in\boldsymbol{W}^{1,p'}_{\sigma,\tau}(\Omega)$ one has \begin{equation*}
\int_{\Omega}\widetilde{A}_{p}\,\boldsymbol{u}\cdot\overline{\boldsymbol{v}}\,\mathrm{d}\,x\,=\,\int_{\Omega}\mathbb{D}(\boldsymbol{u})\boldsymbol{:}\mathbb{D}(\overline{\boldsymbol{v}})\,\textrm{d}\,x.
\end{equation*}
 Notice also that, if $\,\widetilde{A}_{p}\boldsymbol{u}\in\boldsymbol{L}^{p}_{\sigma,\tau}(\Omega),\,$ then $\widetilde{A}_{p}\boldsymbol{u}\,=\,A_{p}\boldsymbol{u}\,$ in $\Omega.$
 
 \medskip
 
 Next we study the complex powers of the operator $(I+\widetilde{A}_{p})$ on $\boldsymbol{L}^{p}(\Omega)$.
 
 \begin{prop}\label{rmkI+Aptilde}
Let $0<\alpha<1$ be fixed and let $z\in\mathbb{C}$ such that $-1<-\alpha<\mathrm{Re}\,z<0$. There exist $0<\theta<\pi/2$ and $C>0$ such that 
\begin{equation}\label{complexpower3I+Aptilde}
\Vert(I+\widetilde{A}_{p})^{z}\Vert_{\mathcal{L}(\boldsymbol{L}^{p}(\Omega))}\,\leq\,C(\alpha,\Omega,p)\,e^{\vert\mathrm{Im}\,z\vert\,\theta_{0}},
\end{equation}
where the constant $C=C(\alpha,\Omega,p)>0$ is independent of $\mathrm{Re}\,z$. Moreover when $\Omega$ is strictly star shaped with respect to one of its points, then the following estimate holds
\begin{equation}\label{estimpuI+Aptilde33}
\forall\,\mu>0,\quad\Vert(I+\mu^{2}\,\widetilde{A}_{p})^{z}\Vert_{\mathcal{L}(\boldsymbol{L}^{p}(\Omega))}\,\leq\,C(\alpha,\Omega,p)\,e^{\vert\mathrm{Im}\,z\vert\,\theta_{0}},
\end{equation}
with a constant $C(\alpha,\Omega,p)$ independent of $\mu$ and $\mathrm{Re}\,z$.
\end{prop}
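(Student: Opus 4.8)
The plan is to reproduce, on the full space $\boldsymbol{L}^{p}(\Omega)$ and for the operator $\widetilde{A}_{p}$, the two arguments already carried out for $A_{p}$ on $\boldsymbol{L}^{p}_{\sigma,\tau}(\Omega)$: the Dunford-integral estimate of Theorem \ref{pureimg1+lap} for \eqref{complexpower3I+Aptilde}, and the scaling argument of Theorem \ref{Lapimpowerproposition} for \eqref{estimpuI+Aptilde33}. The only genuinely new point is that $\widetilde{A}_{p}$ acts in $\boldsymbol{L}^{p}(\Omega)$ while its domain consists of solenoidal fields, so the first task is to transfer the resolvent bounds of Proposition \ref{I+lap} to $I+\widetilde{A}_{p}$.

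First I would establish, for $I+\widetilde{A}_{p}$ on $\boldsymbol{L}^{p}(\Omega)$, the analogue of Proposition \ref{I+lap}: a sector $\Sigma_{\theta_{0}}$ contained in the resolvent set of $-(I+\widetilde{A}_{p})$ together with $\Vert(\lambda I+I+\widetilde{A}_{p})^{-1}\Vert_{\mathcal{L}(\boldsymbol{L}^{p}(\Omega))}\leq\kappa(\Omega,p)/\vert\lambda\vert$, and the small-$\lambda$ refinement with factor $\vert\lambda\vert^{\alpha-1}$. The key observation is that, for $\boldsymbol{f}\in\boldsymbol{L}^{p}(\Omega)$, the resolvent equation $(\lambda I+I+\widetilde{A}_{p})\boldsymbol{u}=\boldsymbol{f}$ is exactly the generalized Stokes resolvent problem $(\lambda+1)\boldsymbol{u}-\Delta\boldsymbol{u}+\nabla\pi=\boldsymbol{f}$, $\mathrm{div}\,\boldsymbol{u}=0$, with the Navier conditions \eqref{Navierbc}, now with non-solenoidal data. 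Applying the Helmholtz projection $P$ of \eqref{helmholtzproj} annihilates $\nabla\pi$, fixes $\boldsymbol{u}$, and turns $-P\Delta\boldsymbol{u}$ into $A_{p}\boldsymbol{u}$, so that $\boldsymbol{u}=(\lambda I+I+A_{p})^{-1}P\boldsymbol{f}$; moreover $P\big(\boldsymbol{f}-(\lambda+1)\boldsymbol{u}+\Delta\boldsymbol{u}\big)=P\boldsymbol{f}-(\lambda+1)\boldsymbol{u}-A_{p}\boldsymbol{u}=\boldsymbol{0}$, so the residual is a gradient and the required $\pi$ exists. Hence $\lambda I+I+\widetilde{A}_{p}$ is invertible on $\boldsymbol{L}^{p}(\Omega)$ and, by Proposition \ref{I+lap} together with the boundedness of $P$ on $\boldsymbol{L}^{p}(\Omega)$, both resolvent bounds hold with $\kappa(\Omega,p)$ replaced by $\kappa(\Omega,p)\Vert P\Vert$.

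With these bounds, and since $I+\widetilde{A}_{p}$ is a non-negative operator with bounded inverse, the complex powers $(I+\widetilde{A}_{p})^{z}$ are given for $-\alpha<\mathrm{Re}\,z<0$ by the Dunford integral \eqref{forintimpur} over the contour \eqref{gammatheta0} (cf.\ \cite{Ko}); the estimate \eqref{complexpower3I+Aptilde} then follows from the computation in Theorem \ref{pureimg1+lap}(i), bounding $\vert(-\lambda)^{z}\vert\leq\rho^{\mathrm{Re}\,z}e^{\vert\mathrm{Im}\,z\vert\,\theta_{0}}$, splitting each radial integral at $\rho=1/(2\,\kappa(\Omega,p))$, inserting the two resolvent bounds, and using $-\alpha<\mathrm{Re}\,z<0$ to make both pieces converge. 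For the star-shaped case I would use the scaling $S_{\mu}$ of \eqref{SMU}, check $I+\mu^{2}\widetilde{A}_{p}=S_{\mu}(I+\widetilde{A}_{p})S_{\mu}^{-1}$ and hence $(I+\mu^{2}\widetilde{A}_{p})^{z}=S_{\mu}(I+\widetilde{A}_{p})^{z}S_{\mu}^{-1}$ through \eqref{forintimpur}; since $S_{\mu}$ is a scalar multiple of an isometry between $\boldsymbol{L}^{p}(\Omega)$ and $\boldsymbol{L}^{p}(\Omega_{\mu})$, the scalar cancels in the conjugation, so $\Vert(I+\mu^{2}\widetilde{A}_{p})^{z}\Vert_{\mathcal{L}(\boldsymbol{L}^{p}(\Omega))}\leq\Vert(I+\widetilde{A}_{p})^{z}\Vert_{\mathcal{L}(\boldsymbol{L}^{p}(\Omega))}$ and \eqref{estimpuI+Aptilde33} follows with a constant independent of $\mu$.

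The main obstacle, and the only place where this differs from the solenoidal case, is precisely the invertibility of $\lambda I+I+\widetilde{A}_{p}$ on all of $\boldsymbol{L}^{p}(\Omega)$: because $\widetilde{A}_{p}$ is not densely defined there and takes values outside $\boldsymbol{L}^{p}_{\sigma,\tau}(\Omega)$, one must verify both that the residual gradient $\nabla\pi$ can always be produced (the vanishing-of-$P$ computation above) and that the construction of complex powers in \cite{Ko} still applies to a non-negative operator with bounded inverse that is not densely defined — which it does, since for $\mathrm{Re}\,z<0$ the Dunford integral \eqref{forintimpur} defines a bounded operator without any appeal to density. Once these two points are settled, the estimates \eqref{complexpower3I+Aptilde} and \eqref{estimpuI+Aptilde33} are a transcription of the proofs already given for $A_{p}$.
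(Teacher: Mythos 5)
Your proposal is correct and follows essentially the same route as the paper: a sectorial resolvent estimate for $I+\widetilde{A}_{p}$ on $\boldsymbol{L}^{p}(\Omega)$ (with the small-$\lambda$ refinement of exponent $\alpha-1$), then the Dunford integral over $\Gamma_{\theta_{0}}$ estimated exactly as in Theorem \ref{pureimg1+lap}, and finally the conjugation by $S_{\mu}$ for the star-shaped case. The only difference is that the paper merely asserts the resolvent bound \eqref{resolestimAptilde} ("we can verify that\dots"), whereas you justify it explicitly by reducing to $\boldsymbol{u}=(\lambda I+I+A_{p})^{-1}P\boldsymbol{f}$ via the Helmholtz projection and checking that the residual is a gradient --- a welcome clarification, not a departure.
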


\begin{proof}
Consider the resolvent of the operator $\widetilde{A}_{p}$ on $\boldsymbol{L}^{p}(\Omega).$
 We can verify that for a given function $\boldsymbol{f}\in\boldsymbol{L}^{p}(\Omega)$ and for a given $\lambda\in\mathbb{C}^{\ast}$ such that $\mathrm{Re}\,\lambda\geq 0,\,$ there exists a unique function $(\boldsymbol{u},\pi)\in\mathbf{D}(\widetilde{A}_{p})\times W^{1,p}(\Omega)/\mathbb{R}$ solution to the problem : 
\begin{equation*}
 \left\{
\begin{array}{cccc}
\lambda \boldsymbol{u} - \Delta \boldsymbol{u}\,+\,\nabla\pi = \boldsymbol{f},& \mathrm{div}\,\boldsymbol{u} = 0& \qquad \mathrm{in}&\Omega, \\
\boldsymbol{u}\cdot \boldsymbol{n} = 0,&[\mathbb{D}(\boldsymbol{u})\boldsymbol{n}]_{\tau}=\boldsymbol{0} &\qquad\mathrm{on}&\Gamma.
\end{array}
\right.
\end{equation*}
In addition the following estimate holds 
\begin{equation}\label{resolestimAptilde}
\Vert\boldsymbol{u}\Vert_{\boldsymbol{L}^{p}(\Omega)}\,\leq\,\frac{C(\Omega,p)}{\vert\lambda\vert}\Vert\boldsymbol{f}\Vert_{\boldsymbol{L}^{p}(\Omega)}.
\end{equation}
The resolvent set of the operator $\widetilde{A}_{p}$ contains the half-plane $\{\lambda\in\mathbb{C}^{\ast};\,\,\mathrm{Re}\lambda\geq 0\},$ where estimate \eqref{resolestimAptilde} is satisfied. As a result, (see \cite[Chapter VIII, Theorem 1]{Yo}), there exists an angle $0<\theta_{0}<\pi/2$ such that 
the resolvent set of the operator $\widetilde{A}_{p}$ contains the sector $\Sigma_{\theta_{0}}=\big\{\lambda\in\mathbb{C};\,\,\,\vert\arg\lambda\vert\leq\pi-\theta_{0}\big\}$,
where estimate \eqref{resolestimAptilde} is satisfied.

Proceeding in the same way as in the proof of Proposition \ref{I+lap}, we can verify that the operator $I+\widetilde{A}_{p}$ is a non-negative operator with bounded inverse. Thus, for any complex $z\in\mathbb{C}\,$ the complex power $(I+\widetilde{A}_{p})^{z}$ is well defined by the mean of the following Dunford integral :
 \begin{equation*}
(I+\widetilde{A}_{p})^{z}\,=\,\frac{1}{2\,\pi\,i}\,\int_{\Gamma_{\theta_{0}}}(-\lambda)^{z}\,(\lambda\,I+I+\widetilde{A}_{p})^{-1}\,\textrm{d}\,\lambda,
\end{equation*}
with $\Gamma_{\theta_{0}}$ is given by \eqref{gammatheta0}.

We can also verify, (proceeding in the same way as in the proof of Proposition \ref{pureimg1+lap}), that for any  complex $z\in\mathbb{C}$ such that $-1<-\alpha<\mathrm{Re}\,z<0$, with $0<\alpha<1$ fixed one has
\begin{equation*}
\Vert(I+\widetilde{A}_{p})^{z}\Vert_{\mathcal{L}(\boldsymbol{L}^{p}(\Omega))}\,\leq\,C(\alpha,\Omega,p)\,e^{\vert\mathrm{Im}\,z\vert\,\theta_{0}},
\end{equation*}
with a constant $C(\alpha,\Omega,p)$ independent of $\mathrm{Re}\,z$.

Consider now the case where $\Omega$ is strictly star shaped with respect to one of its point. As in the proof of Theorem \ref{Lapimpowerproposition}, using the scaling transformation $S_{\mu}$ given by \eqref{SMU} and the fact that 
\begin{equation*}
\forall\,\mu>0,\quad\mu^{2}\,\widetilde{A}_{p}\,=\,S_{\mu}\,\widetilde{A}_{p}\,S_{\mu}^{-1},\qquad I+\mu^{2}\widetilde{A}_{p}\,=\,S_{\mu}(I+\widetilde{A}_{p})\,S^{-1}_{\mu},
\end{equation*}
one has, for $-1<-\alpha<\mathrm{Re}\,z<0$, with $0<\alpha<1$ is fixed
\begin{equation}\label{estimpuI+Aptilde331}
\Vert(I+\mu^{2}\,\widetilde{A}_{p})^{z}\Vert_{\mathcal{L}(\boldsymbol{L}^{p}(\Omega))}\,\leq\,C(\alpha,\Omega,p)\,e^{\vert\mathrm{Im}\,z\vert\,\theta_{0}},
\end{equation}
with a constant $C(\alpha,\Omega,p)$ independent of $\mu$ and $\mathrm{Re}\,z$.
\end{proof}

\begin{rmk}\label{rmkdomainenondense}
\rm{
Notice that since $\mathbf{D}(\widetilde{A}_{p})$ is not dense in $\boldsymbol{L}^{p}(\Omega),$ estimate \eqref{estimpur1+lap} doesn't necessarily hold for the operator $I+\widetilde{A}_{p}\,$ in $\boldsymbol{L}^{p}(\Omega).$
}
\end{rmk}

\begin{proof}[Proof of Theorem \ref{Lapimpowerproposition} (General case)]
Let $(\Theta_{j})_{j\in J}$ be an open covering of $\Omega$ by a finite number of star-shaped open domains of class $C^{2,1}$ and let us consider a partition of unity $(\varphi_{j})_{j\in J}$ subordinated to the covering $(\Omega_{j})_{j\in J}$, where $\Omega_{j}=\Theta_{j}\cap\Omega\,$ for all $j\in J.\,$ This means that
\begin{equation*}
\forall j\in J,\quad \mathrm{Supp}\varphi_{j}\subset\Omega_{j},\qquad \sum_{j\in J}\varphi_{j}=1,\quad\varphi_{j}\in\mathcal{D}(\Omega_{j}).
\end{equation*}

Now, let $\mu>0\,$ and $\,z\in\mathbb{C}\,$ such that $-1<-\alpha<\mathrm{Re}\,z<0$, where $0<\alpha<1$ is fixed. Let also $\boldsymbol{f}\in\boldsymbol{L}^{p}_{\sigma,\tau}(\Omega)$ and write $\boldsymbol{f}$ in the form
\begin{equation*}
\boldsymbol{f}=\sum_{j\in J}\boldsymbol{f}_{j}, \qquad\forall j\in J,\quad\boldsymbol{f}_{j}=\varphi_{j}\boldsymbol{f}.
\end{equation*}
Notice that for all $j\in J,\,$ $\boldsymbol{f}_{j}$ is not necessarily a divergence free function. Observe also that
\begin{equation*}
(I+\mu^{2}A_{p})^{z}\boldsymbol{f}\,=\,(I+\mu^{2}\widetilde{A}_{p})^{z}\boldsymbol{f}\,=\,\sum_{j\in J}(I+\mu^{2}\widetilde{A}_{p})^{z}\boldsymbol{f}_{j}.
\end{equation*}
As a result, using Proposition \ref{rmkI+Aptilde}, one has
\begin{eqnarray}
\Vert(I+\mu^{2}A_{p})^{z}\boldsymbol{f}\Vert_{\boldsymbol{L}^{p}(\Omega)}&\leq&\sum_{j\in J}\Vert(I+\mu^{2}\widetilde{A}_{p})^{z}\boldsymbol{f}_{j}\Vert_{\boldsymbol{L}^{p}(\Omega)}\nonumber\\
&=&\sum_{j\in J}\Vert(I+\mu^{2}\widetilde{A}_{p})^{z}\boldsymbol{f}_{j}\Vert_{\boldsymbol{L}^{p}(\Omega_{j})}.\nonumber
\end{eqnarray}
Since for all $j\in J,\,$ $\Omega_{j}\,$ is strictly star shaped with respect to one of its points, then using estimate \eqref{estimpuI+Aptilde33} one has 
\begin{eqnarray*}
\Vert(I+\mu^{2}A_{p})^{z}\boldsymbol{f}\Vert_{\boldsymbol{L}^{p}(\Omega)}&\leq&\,e^{\vert\mathrm{Im}\,z\vert\,\theta_{0}}\sum_{j\in J}\,C_{j}\,\Vert\boldsymbol{f}_{j}\Vert_{\boldsymbol{L}^{p}(\Omega_{j})}\nonumber\\
&\leq& C(\alpha,\Omega,p)\,e^{\vert\mathrm{Im}\,z\vert\,\theta_{0}}\Vert\boldsymbol{f}\Vert_{\boldsymbol{L}^{p}(\Omega)}
\end{eqnarray*}
with a constant $C(\alpha,\Omega,p)$ independent of $\mu,\,\mathrm{Re}\,z$ and $\boldsymbol{f}$. 

As in the proof of Proposition \ref{pureimg1+lap}, using the fact that for all $\boldsymbol{f}\in\mathbf{D}(A_{p})$ the operator $(I+\mu^{2}\,A_{p})^{z}\boldsymbol{f}$ is analytic in $z,\,$ with $z$ such that $\,-1<\mathrm{Re}z<1,\,$ and using the density of $\mathbf{D}(A_{p})$ in $\boldsymbol{L}^{p}_{\sigma,\tau}(\Omega),\,$ one has for all $s\in\mathbb{R},\,$
\begin{equation}\label{estI+mu2Apisnonetoile}
\Vert(I+\mu^{2}\,A_{p})^{i s}\Vert_{\mathcal{L}(\boldsymbol{L}^{p}_{\sigma,\tau}(\Omega))}\,\leq\,C(\alpha,\Omega,p)\,e^{\vert s\vert\,\theta_{0}},
\end{equation}
with a constant $C(\alpha,\Omega,p)$ independent of $\mu.$ Finally, using estimate (\ref{estI+mu2Apisnonetoile}) and the fact that $$\Big(\frac{1}{\mu^{2}}\,I+A_{p}\Big)^{i\,s}\,=\,\frac{1}{\mu^{2\,i\,s}}\,(I+\mu^{2} A_{p})^{i\,s},$$ we have
\begin{equation}\label{estnu*nonetoile}
\Big\Vert\Big(\frac{1}{\mu^{2}}\,I+A_{p}\Big)^{i\,s}\Big\Vert_{\mathcal{L}(\boldsymbol{L}^{p}_{\sigma,\tau}(\Omega))}\,\leq\,C(\alpha,\Omega,p)\,e^{\vert s\vert\,\theta_{0}}.
\end{equation}
thus the result is proved.
\end{proof}

\textbf{Acknowledgment.} The author wish to thank the Professors C. Amrouche and M. Escobedo for their remarks that help to improve the manuscript.

\end{document}